\newtheorem{assumption}{Assumption}
\newtheorem{iterative}{Iterative method}
\newtheorem{lemma}{Lemma}[section]
\newtheorem{definition}{Definition}[section]
\theoremstyle{definition}
\newtheorem{remark}{Remark}
\crefname{theorem}{theorem}{theorems}
\Crefname{theorem}{Theorem}{Theorems}
\crefname{definition}{definition}{definitions}
\Crefname{definition}{Definition}{Definitions}
\crefname{lemma}{lemma}{lemmas}
\Crefname{lemma}{Lemma}{Lemmas}
\definecolor{darkgreen}{rgb}{0.0,0.4,0.0}
\definecolor{darkred}{rgb}{0.6,0.0,0.0}
\definecolor{darkblue}{rgb}{0.0,0.0,0.5}
\definecolor{gray}{rgb}{0.5,0.5,0.5}
\definecolor{cyan}{rgb}{0.0,1.0,1.0}
\definecolor{darkcyan}{rgb}{0.0,0.5,0.5}
\definecolor{darkorange}{rgb}{0.8,0.4,0.0}
\definecolor{darkmargenta}{rgb}{0.5,0.0,0.5}
\definecolor{black}{rgb}{0.0,0.0,0.0}
\definecolor{colorL1}{RGB}{255 0 255}
\def\BState{\State\hskip-\ALG@thistlm}
\numberwithin{equation}{section}
\def \e  {\varepsilon}
\def \k  {\kappa}
\def \P {\mathbb{P}}
\def \calT {\mathcal{T}}
\def \t  {\tau}
\def \del {\nabla}
\def \div {\nabla\cdot}
\def \p  {\partial}
\def \bmu {\bm{u}}
\def \bmv {\bm{v}}
\def \bmF {\bm{f}}
\newcommand{\vertiii}[1]{{\left\vert\kern-0.17ex\left\vert\kern-0.17ex\left\vert #1 
		\right\vert\kern-0.17ex\right\vert\kern-0.17ex\right\vert}}
\newcommand{\norm}[1]{{\left\vert\kern-0.25ex\left\vert\kern-0.25ex\left\vert #1 
		\right\vert\kern-0.25ex\right\vert\kern-0.25ex\right\vert}}
\newsavebox{\@brx}
\newcommand{\llangle}[1][]{\savebox{\@brx}{\(\m@th{#1\langle}\)}%
	\mathopen{\copy\@brx\kern-0.5\wd\@brx\usebox{\@brx}}}
\newcommand{\rrangle}[1][]{\savebox{\@brx}{\(\m@th{#1\rangle}\)}%
	\mathclose{\copy\@brx\kern-0.5\wd\@brx\usebox{\@brx}}}
\newcolumntype{C}[1]{>{\centering\arraybackslash}p{#1}}
\renewcommand\nomgroup[1]{%
	\item[\bfseries
	\ifstrequal{#1}{A}{Physical quantities}{%
		\ifstrequal{#1}{N}{Space and time}{%
			\ifstrequal{#1}{B}{Function spaces}{%
				\ifstrequal{#1}{C}{Discretization}{%
					\ifstrequal{#1}{K}{Abbreviations}{}}}}}%
	]}
\providecommand{\keywords}[1]
{
	\small	
	\textit{Keywords---} #1
}
\title{Convergent adaptive iterative schemes for solving multi-physics problems}
\author[1]{Jakob S. Stokke} 
\author[1]{Kundan Kumar} 
\author[1]{Florin A. Radu} 
\affil[1]{Center for Modeling of Coupled Subsurface Dynamics, Department of Mathematics, University of Bergen, Bergen, Norway}
\date{}
\begin{document}

\maketitle

\begin{abstract}
	In this paper, we derive a practical, general framework for creating adaptive iterative (linearization or splitting) algorithms to solve multi-physics problems. This means that, given an iterative method, we derive \textit{a posteriori} estimators to predict the success or failure of the method. Based on these estimators, we propose adaptive algorithms, including adaptively switching between methods, adaptive time-stepping methods, and the adaptive tuning of stabilization parameters. We apply this framework to two-phase flow in porous media, surfactant transport in porous media, and quasi-static poroelasticity. 
\end{abstract}
\keywords{Multi-physics, Two-phase flow, Surfactant transport, Poroelasticity, Nonlinear, Adaptivity, a posteriori error estimates}

	\section{Introduction}
	\label{sec:intro}

When modeling multi-physics problems, the equations are often non-linear and fully coupled, leading to complex non-linear systems. To numerically solve these systems, we often rely on iterative methods based on linearization or splitting techniques. The design of iterative methods that are robust, accurate, and efficient is very challenging. For example, in the context of degenerate parabolic PDEs like Richards' equation, the convergence of Newton's method is not guaranteed unless the time step is sufficiently small \cite{radu2006}. It is possible to improve the convergence by, e.g. considering line search and variable switching \cite{brenner_improving_2017} or trust-region techniques \cite{wang_trust-region_2013}. Another approach is to utilize a more robust method to generate a good initial guess for a higher-order method. This was proposed for the Picard method and Newton's method \cite{bergamaschi_mixed_1999,lehmann_comparison_1998}, and for the L-scheme \cite{pop2004,list_study_2016} to Newton's method in \cite{list_study_2016,stokke_adaptive_2023,ahmed_equilibrated_2025}. Another approach applied to the Navier-Stokes equations is a combined two-step Picard-Newton algorithm \cite{pollock_analysis_2025}, utilizing the increased robustness of the Picard method. A key aspect of the switching strategy in the aforementioned papers is that robustness can be guaranteed if the method used to generate an initial guess for the higher-order method is sufficiently robust. This means that combining iterative methods can lead to robust, accurate, and efficient iterative algorithms provided we know when to switch between them.

    In this paper, we derive a practical and general framework for creating adaptive iterative (linearization or splitting)  algorithms to solve multi-physics problems based on \textit{a posteriori} error estimates. By adaptive algorithms, we mean iterative methods that change adaptively during the iteration process. There are many reasons for considering adaptive algorithms, applying both to linearization or splitting algorithms. We want to ensure convergence of the iterative procedure, increase the convergence speed, and reduce the computational time for the method. This can be done in a multitude of ways, all relying on \textit{a posteriori} error estimators. Some examples include
    \begin{itemize}
		\item derive indicators to switch between different linearization schemes. E.g. switch from a robust, typical first-order scheme to a fast, higher-order method to achieve faster convergence.
		\item derive estimators to optimize a stabilization constant which appears in many different iterative procedures, like in linearization schemes, e.g. L-scheme or modified L-scheme \cite{mitra_modified_2019} for Richards' equation, or in splitting schemes, e.g. fixed-stress or undrained-split for Biot equation.
		\item derive indicators for predicting the failure of a linearization scheme, and then automatically reduce the time step size without using computational effort to compute multiple iterations before determining that the method diverges.   
	\end{itemize} 
	The framework presented here is based on the ideas of \cite{mitra_guaranteed_2023} and \cite{stokke_adaptive_2023}. The first step is to define a measure for the linearization or splitting error for the particular scheme or multiple schemes under consideration. The second step is to derive an \textit{a posteriori} error estimator based on the previous iterations, which bounds the error. Finally, the estimator is then used to create an adaptive algorithm. See \Cref{fig: pseudo code} for an illustration of how the quantity can be used in practice. We aim to have cheap and explicit \textit{a posteriori} estimators which should not require post-processing. This means that the estimators might not necessarily be efficient in terms of providing a very sharp bound for the error, nevertheless they can be used in practice to create adaptive iterative algorithms. 

    Typically, \textit{a posteriori} estimators for the linearization error are used as an adaptive stopping criterion alongside spatial estimators, see e.g. \cite{mitra_guaranteed_2023,ern_adaptive_2013, heid_adaptive_2020}. The linearization estimators in \cite{stokke_adaptive_2023} are directly compared against the linearization error to design adaptive linearization algorithms for Richards' equation. This direct comparison was also done in the proposed adaptive modified L-scheme in \cite{javed_robust_2025}.  In \cite{ahmed_equilibrated_2025}, it was proposed to switch between two schemes when the linearization estimator was smaller than the other estimators, including a spatial estimator. In addition, they included an adaptive stopping criterion for the iteration process.  In our case, the linearization estimator will be directly compared with the linearization error, and no spatial error estimators will be pursued.

In this paper, we consider three different, important problems: two-phase flow in porous media, surfactant transport in porous media, and flow in deformable porous media (modeled by the quasi-static Biot system). For the two-phase flow model, we consider a global and complementary pressure formulation and two different linearization schemes, the L-scheme and Newton's method. The L-scheme converges as long as the added stabilization parameter is greater than a certain critical value \cite{radu_robust_2015,radu_robust_2018}. In the Lipschitz continuous case, this critical value can be determined, but this is not possible for the H\"older continuous case. In the latter case, one has to choose relatively large values for the stabilization parameter leading to slow convergence. To avoid this, we develop two adaptive algorithms, the first an adaptive switching algorithm from the L-scheme to Newton's method, and the other an adaptive parameter tuning algorithm for optimizing the stabilization parameter. In the case of surfactant transport, we know that Newton's method struggles for fine meshes unless the time step is small enough \cite{radu2006,illiano_iterative_2021}. Therefore, we propose an adaptive switching algorithm from the L-scheme to Newton's method and an adaptive time-stepping algorithm. For the quasi-static Biot model, we consider the fixed-stress splitting scheme. We propose an adaptive parameter tuning algorithm for selecting the stabilization parameter which appears in the fixed-stress, using the theoretical bounds from \cite{storvik_optimization_2019}.

    The paper is structured in the following manner. We first present the framework along with computational considerations in \Cref{sec: framework}. In \Cref{sec: twophaseflow}, we consider a two-phase flow system in porous media. For the linearization of the system, the L-scheme and Newton's method are both considered. We derive estimators for predicting the success of Newton's method if the previous iterate was computed using the L-scheme or Newton's method. A similar estimator is also derived for the L-scheme. Based on these estimators we devise two algorithms to improve robustness and convergence speed. In \Cref{sec: surfactant transport}, we consider a surfactant transport system where we derive estimators for an adaptive switching algorithm and an adaptive time-stepping algorithm. In \Cref{sec: biot} we consider the quasi-static Biot system and the fixed-stress iterative splitting scheme with the goal of adaptively changing the stabilization parameter to improve the convergence.

    \section{Framework}\label{sec: framework}
	
	In this section, we present the general framework. Let us consider a generic multi-physics problem described by the unknown variables $a_1(\textbf{x},t),a_2(\textbf{x},t),...,a_{M}(\textbf{x},t)$ which can be both scalar and vectorial variables. If the linearization method or iterative decoupling scheme can be written in terms of a bilinear form $\mathcal{B}_{lin}$ evaluated at the difference between two sequential iterations $\delta a^{k+1}_i=a^{k+1}_{i}-a^{k}_{i}$ being equal to the residual $\mathcal{R}$ evaluated at the previous iteration $a_{i}^{k}$, the method under consideration would be a good candidate to apply the techniques developed in this paper. In other words, we consider iterative methods of the form
	\begin{equation}\label{eq: general linearization}
		\mathcal{B}_{lin}\left((\delta a_1^{k+1},\delta a_2^{k+1},...,\delta a_M^{k+1}),(\varphi_1,\varphi_2,...,\varphi_M)\right)=-\langle \mathcal{R}(a_1^{k}, a_2^{k},..., a_M^{k}),(\varphi_1,\varphi_2,...,\varphi_M)\rangle,
	\end{equation}
	with appropriate spaces for $\varphi_1,\varphi_2,...,\varphi_M$. For schemes, like the L-scheme or modified L-scheme the bilinear form is symmetric and can be defined as an iteration-dependent inner product which induces a norm. This particular norm will be used as the measure for the error of the system. In the case of non-symmetric bilinear forms we define the iteration-dependent norm as the symmetric part of $\mathcal{B}_{lin}$, which we denote $\mathcal{B}_{lin}^{sym}$. Consequently, we measure the error in the iteration-dependent norm
	\begin{equation}
	\mathcal{B}_{lin}^{sym}\left((\varphi_1,\varphi_2,...,\varphi_M),(\varphi_1,\varphi_2,...,\varphi_M)\right):=	\norm{\varphi_1,\varphi_2,...,\varphi_M}_{inc,a_1^{k}, a_2^{k},..., a_M^{k}}^{2}.
	\end{equation}
	By denoting $\bar{a}_i^{k+1}$ as the continuous solution of \eqref{eq: general linearization} we see that
	\begin{align}
    \begin{split}
		\norm{(a_1^{k}-\bar{a}_1^{k+1}, a_2^{k}-\bar{a}_2^{k+1},..., a_M^{k}-\bar{a}_M^{k+1})}_{inc,a_1^{k}, a_2^{k},..., a_M^{k}}\leq \underbrace{\norm{\delta a_1^{k+1},\delta a_2^{k+1},...,\delta a_M^{k+1}}_{inc,a_1^{k}, a_2^{k},..., a_M^{k}}}_{\mbox{incremental error}}\\+\underbrace{\norm{(a_1^{k+1}-\bar{a}_1^{k+1}, a_2^{k+1}-\bar{a}_2^{k+1},..., a_M^{k+1}-\bar{a}_M^{k+1})}_{inc,a_1^{k}, a_2^{k},..., a_M^{k}}}_{\mbox{discretization error}},
        \end{split}
    \end{align} 
	where the first part can be interpreted as the incremental error of the system and the second as the discretization error since it measures the difference between the numerical solution at $k+1$ and the continuous solution at $k+1$. This mirrors the orthogonal decomposition result from \cite{mitra_guaranteed_2023}. This result is of particular usefulness when designing an adaptive iterative method, as we are interested in predicting when it fails. We are not interested in measuring the discretization error, and will only use the incremental error as it informs us about the performance of the iterative scheme. In \cite{stokke_adaptive_2023} they considered switching between a robust scheme (the L-scheme) and a higher-order method (Newton) based on \textit{a posteriori} error estimates for the incremental error.  Here, we will show how to derive computable estimators in a similar manner for multiple problems. The design of adaptive iterative algorithms will rely upon these bounds. To bound the incremental error by an estimator, we will follow similar steps as in \cite{stokke_adaptive_2023}, namely
    \begin{itemize}
        \item[Step 1.] Recognizing that the incremental error satisfies
        \begin{align}\label{eq: framework step 1}
        \begin{split}
        \norm{\delta a_1^{k+1},...,\delta a_M^{k+1}}_{inc,a_1^{k},..., a_M^{k}}^{2}=&-\langle \mathcal{R}(a_1^{k}, ..., a_M^{k}),(\delta a_1^{k+1},...,\delta a_M^{k+1})\rangle\\
        &-\mathcal{B}_{lin}^{non-sym}\left((\delta a_1^{k+1},...,\delta a_M^{k+1}),(\delta a_1^{k+1},...,\delta a_M^{k+1})\right),  
        \end{split}
        \end{align}
        where $\mathcal{B}_{lin}^{non-sym}$ is the non-symmetric part of the bilinear form.
        \item[Step 2.] Consider only the residual $\mathcal{R} (a_1^{k}, a_2^{k},..., a_M^{k})$, and see that the previous iterate satisfies \eqref{eq: general linearization} at $k-1$, such that we have
        \begin{align}\label{eq: framework step 2}
        \begin{split}
            -\langle \mathcal{R}(a_1^{k}, ..., a_M^{k}),(\delta a_1^{k+1},...,\delta a_M^{k+1})\rangle&+\langle \mathcal{R}(a_1^{k-1}, ..., a_M^{k-1}),(\delta a_1^{k+1},...,\delta a_M^{k+1})\rangle\\
            &+\mathcal{B}_{lin}\left((\delta a_1^{k},...,\delta a_M^{k}),(\delta a_1^{k+1},...,\delta a_M^{k+1})\right).
            \end{split}
        \end{align}
        \item[Step 3.] See which terms cancel in \eqref{eq: framework step 2} and through the use of inequalities along with algebraic manipulation try to recover the incremental error. 
        \item[Step 4.] The non-symmetric part of \eqref{eq: framework step 1} usually needs to be considered in more detail, and possibly additional assumptions are going to be needed, see \Cref{sec: twophaseflow} and \Cref{sec: surfactant transport}. The goal is again to recover the incremental error.
    \end{itemize}
    

	

	\begin{figure}[H]
		\centering
		\includegraphics[width=0.9\textwidth]{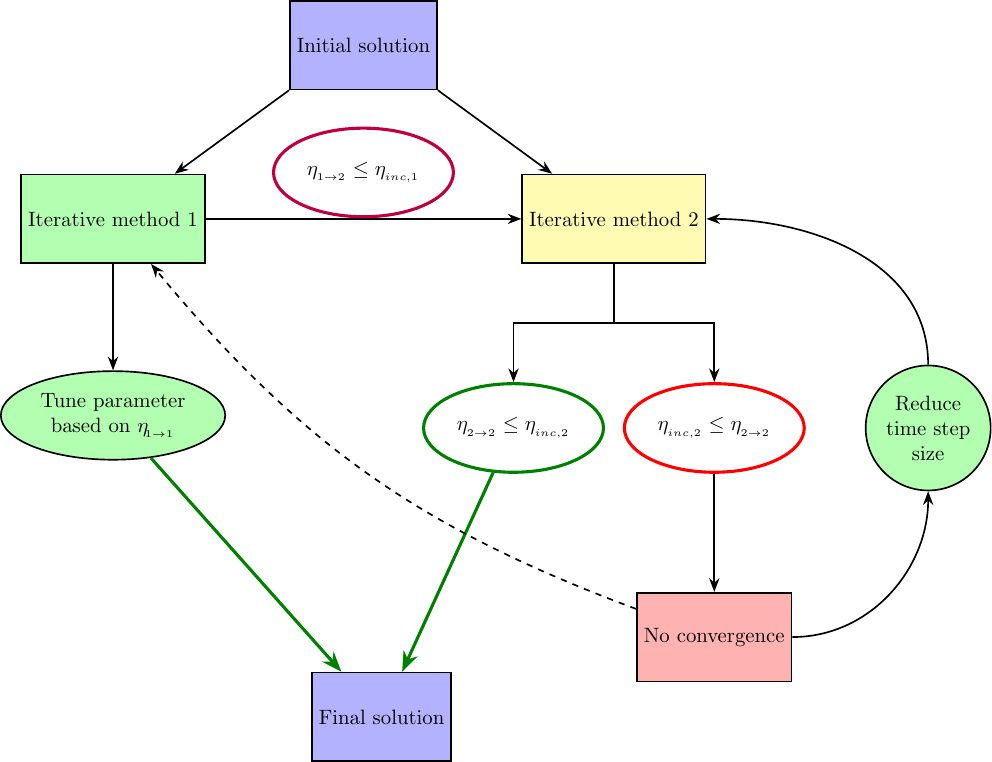}
		\caption{Illustration of different adaptive iterative algorithms based on \textit{a posteriori} error estimators. Here $\eta_{\!_{\,i\to j}}$ represents the bound on the incremental error $\eta_{\!_{\,inc,i}}$ using the iterate of scheme $i$ to predict the error at the next iteration computed using scheme $j$.}
        \label{fig: pseudo code}
	\end{figure}
	
	\subsection{Notation}
	Throughout the paper, we will use common notations from functional analysis. Let $\Omega\subset\mathbb{R}^{d},\,d\in\{1,2,3\}$ be a bounded domain with a Lipschitz continuous boundary. The space of square integrable functions is denoted by $L^{2}(\Omega)$ with $\langle\cdot,\cdot\rangle$ and $\|\cdot\|$ being the usual inner product and norm, respectively. The space with weak derivatives of order $m$ in $L^{2}(\Omega)$ will be the Sobolev space $H^{m}(\Omega)$ of $L^{2}(\Omega)$. By $H_0^{1}(\Omega)$ we denote the subspace of $H^{1}(\Omega)$ with vanishing trace at the boundary. The dual of $H_0^{1}(\Omega)$ will be denoted $H^{-1}(\Omega)$. Spaces written in bold will be vector-valued. For the discretization in space throughout the paper, we will consider a triangulation of $\Omega$ into simplices denoted $\calT_h$, where $h:=\max_{K\in\calT_h}\{diam(K)\}$ is the mesh size. We will consider the following finite element spaces
    \begin{align}
        Q_h :=& \left\{q_h\in H_0^{1}(\Omega)\,\big|\, q_{h|K}\in \P_{1}(K),\,K\in\mathcal{T}_h\right\},\\
        \bm{V}_h :=& \left\{\bmv_h\in \bm{H}_0^{1}(\Omega)\,\big|\, \bmv_{h|K}\in \P_{2}(K),\,K\in\mathcal{T}_h\right\},
    \end{align}
    where $\P_{p}(K)$ is the space of $p$-order polynomials on $K$. In time, we decompose the interval $[0, T]$ for $T>0$ into the discrete times $0<t_1<\cdots < t_{N-1}< T$ where the size of each interval, i.e. the time step size, is denoted by $\t$. 
    
	
	Related to the estimators, we introduce the following notation. We denote the difference between two consecutive iterates by $\delta a^{k+1}:=a^{k+1}-a^{k}$. For two iterative schemes $i$ and $j$, we denote by $\eta_{\!_{\,i\to j}}$ the bound on the error if a switch from $i$ to $j$ happens. Throughout this paper if $i<j$, then $i$ is a more robust scheme and $j$ is either a higher-order scheme or has a faster convergence rate. In the case when $i=j$, the estimator predicts the bound on the next iterate using the same scheme, i.e. the failure or success of a method. The incremental error for a method $i$ will be denoted by $\eta_{\!_{\,inc,i}}^{k}$.
    
	
	\subsection{Computational considerations}\label{sec: comput considerations}
	
	We are interested in computationally inexpensive estimators so that the efficiency considerations are less important. With further computational efforts, e.g., by post-processing of the numerical simulations, the efficiency can be improved. We point out \cite{ahmed_equilibrated_2025,ern_adaptive_2013,fevotte_adaptive_2024,fontana_posteriori_2024} for ideas related to the post-processing to improve efficiency in the examples that we consider, but we do not pursue this in our algorithms discussed here. Moreover, our estimators may be valid only in a subset of the domain. For example, in some of the estimates below, we will divide by a quantity which potentially is zero, limiting its validity in a region where it is non-zero. Again, this may be avoided by additional computational efforts. For example, in \cite{stokke_adaptive_2023}, they introduced equilibrated fluxes that avoid these regions. In terms of our emphasis on the cheapness, it is relevant to mention that    only computing the estimators outside of these regions, however, did not influence the performance of the adaptive switching. Consequently, in what follows, we only compute the estimators in the region of validity. We observe in the numerical experiments that this choice still gives good convergence properties. Since this is a practical framework, we also make the choice that any constants appearing in the estimates which are related to assumptions will not be computed. This means that the constants in the assumptions in \Cref{sec: twophaseflow} and \Cref{sec: surfactant transport} are set to zero in the actual computations.
	
	When we seek to switch between schemes, we can use $\eta_{\!_{\,i\to j}}^{k}\leq \eta_{\!_{\,inc,i}}^{k}$ to switch form $i$ to $j$. Instead we introduce a constant $C_{\rm tol}\geq 1$ to speed up the switching and will make the switch when $\eta_{\!_{\,i\to j}}^{k}\leq C_{\rm tol}\eta_{\!_{\,inc,i}}^{k}$.  In the case of adaptive time-stepping, we consider halving the time step when $\eta_{\!_{\,i\to i}}\geq 1$, as in general this is enough to capture the failure within one iteration. We also increase the time step size again when the scheme $i$ converges in less than 5 or 10 iterations. This choice for increasing the time step size is mainly heuristic. We could devise a strategy for increasing $\t$ based on the ratio $\eta_{\!_{\,i\to i}}^{k}/\eta_{\!_{\,inc,i}}^{k}$. For example if it is small enough, we should increase the time step size again but it is not pursued here.

        The estimators  $\ \eta_{\!_{\,i\to i}}^k$ predict the incremental error $\eta_{\rm inc,i}^{k+1}$ of the $(k+1)^{\rm th}$ iteration if done using the iterative method $i$. The sharpness of the estimate can be measured using the \textbf{effectivity index}, i.e. if $(k+1)^{\rm th}$ iteration is Iterative method $i$ then
\begin{align}\label{eq: eff ind}
    \text{(Eff. Ind.)}_k:=
        \eta_{\!_{\,i\to i}}^k/\eta_{\rm inc,i}^{k+1}, \text{ if } k^{\rm th} \text{ iteration is Iterative method $i$}.
\end{align}
The indices should always be greater than 1, and the closer they are to 1, the sharper the estimate is. However, we have chosen not to compute constants in the estimates and therefore cannot guarantee that the computed estimators are an upper bound on the incremental error. Even when we have a guaranteed upper bound, the bound does not need to be sharp. This should be accounted for when designing the adaptive algorithms.

All linear systems are solved with a direct solver, and the implementation is done in FreeFEM \cite{freefem}.
	
	\section{Two-Phase flow in porous media}\label{sec: twophaseflow}
	
	In this section, we consider a two-phase flow model in porous media. The model we consider assumes that the fluids are immiscible and incompressible, and that the solid matrix is non-deformable. We will adopt a global pressure and complementary pressure formulation, which is obtained through the Kirchhoff transformation \cite{arbogast_existence_1992,chavent_mathematical_1986,chen2001}. Let $\alpha\in\{w,n\}$ denote the wetting and non-wetting phases, then the two-phase model can be written as; Find saturation $(s_\alpha)$, pressure $(p_\alpha)$, flux $(\boldsymbol{q}_\alpha)$ and density $(\rho_\alpha)$ of the phase $\alpha$ such that
	\begin{subequations}
		\begin{align}
			\partial_t (\phi\rho_\alpha s_\alpha)+\div(\rho_\alpha\boldsymbol{q}_\alpha)&=0, \quad&&\alpha\in\{w,n\},\\
			\boldsymbol{q}_{\alpha}&=-\frac{k_{\!_{r,\alpha}}\k}{\mu_\alpha}(\del p_\alpha-\rho_\alpha\boldsymbol{g}),\quad&&\alpha\in\{w,n\},\\
			s_w+s_n&=1,\\
			p_n-p_w &= p_{c}(s_w),
		\end{align}
	\end{subequations}
	where $\phi$ is the porosity, $\kappa$ is the permeability, $\mu_\alpha$ is the viscosity of phase $\alpha$ and $\bm{g}$ is the gravitational vector. Also,  $k_{\!_{r,\alpha}}(\cdot)$ is the relative permeability and $p_{c}(\cdot)$ is the capillary pressure, both are given functions of the saturation. 
	
	We adopt a global and complementary pressure formulation by introducing the transformations
	\begin{align}
		P(\boldsymbol{x},s_w):=&\, p_n(\boldsymbol{x})-\int_{0}^{s_w}f_{w}(\boldsymbol{x},\xi)\frac{\partial p_c}{\partial \xi}(\boldsymbol{x},\xi) d\xi,\\
		\Theta(\boldsymbol{x},s_w):=&\,-\int_{0}^{s_w}f_{w}(\boldsymbol{x},\xi)\lambda_n(\boldsymbol{x},\xi)\frac{\partial p_c}{\partial \xi}(\boldsymbol{x},\xi) d\xi,
	\end{align}
	where the mobility of phase $\alpha$ is denoted by $\lambda_{\alpha}=k_{\!_{r,\alpha}}/\mu_{\alpha}$. Also, $f_w$ is the fractional flow function, i.e. $f_w=\lambda_w/(\lambda_w+\lambda_n)$.
	For more details about the transformation, including the existence and uniqueness for weak solution to the mixed formulation we refer to \cite{chen2001}. We will consider a conformal Galerkin discretization, but the result follows in a similar manner when using a mixed method. The transformation leads to the two-field formulation
	\begin{subequations}
		\begin{align}
			\partial_t s(\Theta)-\div(\del\Theta)-\div(f_w(s)\k{\lambda_{t}}(s)\del P)&=f_1(s),\quad&&\mbox{in }\Omega\times[0,T],\\
			-\div(\k{\lambda_{t}}(s)\del P)&=f_2(s),\quad&&\mbox{in }\Omega\times[0,T],
		\end{align}
		with $s:=s_w$ and $\lambda(s)=\lambda_n+\lambda_w$ being the total mobility. 
	\end{subequations}
	The fully discrete problem at time $t_n$ using the Backward Euler method in time and $\P 1$ elements in space is: Given $\Theta_{h}^{n-1},P_{h}^{n-1}\in Q_h$ find $\Theta_{h}^{n},P_{h}^{n}\in Q_h$ such that
	\begin{subequations}\label{eq: two phase discrete}
		\begin{align}
			\langle s(\Theta_{h}^{n})-s(\Theta_{h}^{n-1}),q_h\rangle +\tau\langle\del\Theta_{h}^{n},\del q_h\rangle+\tau\langle f_w(s(\Theta_{h}^{n}))\k{\lambda_{t}}(s(\Theta_{h}^{n}))\del P^{n},\del q_h\rangle&=\tau\langle f_1^{n},q_h\rangle,\label{eq: two phase discrete a}\\
			\langle \k{\lambda_{t}}(s(\Theta_{h}^{n}))\del P^{n},\del r_h\rangle&=\langle f_2^{n},r_h\rangle, \label{eq: two phase discrete b}
		\end{align}
		for all $q_h, r_h\in Q_h$.
	\end{subequations} To linearize the above problem we will consider two schemes, the L-scheme \cite{radu_robust_2015,radu_robust_2018} and Newton's method. Due to the degeneracy of the equations, Newton's method is not guaranteed to converge. The L-scheme offers a robust alternative, but the convergence is only linear. Another issue is that $s(\cdot)$ may only be H\"older continuous, and for smaller H\"older exponents, it is known that the stabilization parameter in the L-scheme has to be picked sufficiently large to guarantee convergence \cite{radu_robust_2018}. A larger parameter will likely lead to even slower convergence.  Therefore, our goal in this section is to derive \textit{a posteriori} error estimators for predicting when to tune the stabilization parameter $L$ and also for when to switch between the L-scheme and Newton's method adaptively. The strategies presented here can be combined, but we chose not to for simplicity.
Then we can define the residual of the above formulation as
	\begin{definition}[Residual two-phase flow] For all $q_h,r_h\in Q_h$, the total residual of \eqref{eq: two phase discrete a} and \eqref{eq: two phase discrete b} is defined by
		\begin{equation}
			\begin{aligned}
				\langle\mathcal{R}(\varphi_1,\varphi_2),(q_h,r_h)\rangle :=& \langle s(\varphi_1)-s(\Theta_{h}^{n-1}),q_h\rangle+\tau\langle\del\varphi_1,\del q_h\rangle+\tau\langle f_w(s(\varphi_1))\k{\lambda_{t}}(s(\varphi_1))\del\varphi_2,q_h\rangle\\
				&+\tau\langle \k{\lambda_{t}}(s(\varphi_1))\del\varphi_2,r_h\rangle-\tau\langle f_1^{n},q_h\rangle-\tau\langle f_2^{n},r_h\rangle.
			\end{aligned}
		\end{equation}
        \label{def: twophase residual}
	\end{definition}
	
\subsection{Linearization methods}
	We consider two linearization schemes, the L-scheme and Newton's method. Let $k\geq 1$ be the iteration index, and we omit the time index at the new time, i.e. we let $\Theta_{h}^{k}:=\Theta^{n,k}_h$ and $P_{h}^{k}:=P^{n,k}_{h}$. 
	First we consider the L-scheme:  Given $\Theta_{h}^{n-1},\Theta_{h}^{k},P_{h}^{k}\in Q_h$ find $\Theta_{h}^{k+1},P_{h}^{k+1}\in Q_h$ for an $L>0$ such that
	\begin{subequations}
		\begin{align}
        \begin{split}
			\langle L(\Theta_{h}^{k+1}-\Theta_{h}^{k}),q_h\rangle+\langle s(\Theta_{h}^{k})-s(\Theta_{h}^{n-1}),q_h\rangle +\tau\langle\del\Theta_{h}^{k+1},\del q_h\rangle\\
            +\tau\langle f_w(s(\Theta_{h}^{k}))\k{\lambda_{t}}(s(\Theta_{h}^{k}))\del P_{h}^{k+1},\del q_h\rangle&=\tau\langle f_1^{n},q_h\rangle,
            \end{split}\\
			\tau\langle \k{\lambda_{t}}(s(\Theta_{h}^{k}))\del P_{h}^{k+1},\del r_h\rangle&=\tau\langle f_2^{n},r_h\rangle,
		\end{align}
	\end{subequations}
for all $ q_h, r_h\,\in Q_h$.
	Then we can define the bilinear form for the L-scheme depending on the previous iteration $\Theta_{h}^{k}$,
	\begin{equation}\label{eq: two phase bilinear form Lscheme}
		\begin{aligned}
			\mathcal{B}_{L,\Theta_{h}^{k}}((\varphi_1,\varphi_2),(q_h,r_h)):=\langle L\varphi_1,q_h\rangle+\tau\langle \del \varphi_1,\del q_h\rangle+\tau\langle f_w(s(\Theta_{h}^{k}))\k{\lambda_{t}}(s(\Theta_{h}^{k}))\del\varphi_2,\del q_h\rangle \\
			+ \t\langle \k{\lambda_{t}}(s(\Theta_{h}^{k}))\del \varphi_2,\del r_h\rangle.
		\end{aligned}
	\end{equation}
	Consequently, we can define the L-scheme linearization as 
\begin{iterative}[L-scheme] For the bilinear form defined in \eqref{eq: two phase bilinear form Lscheme} and the residual in \Cref{def: twophase residual} the L-scheme can be defined as
		\begin{equation}\label{eq: twophase Lscheme}
		\mathcal{B}_{L,\Theta_{h}^{k}}((\delta\Theta_{h}^{k+1},\delta P_{h}^{k+1}),(q_h,r_h))=-\langle\mathcal{R}(\Theta_{h}^{k},P_{h}^{k}),(q_h,r_h)\rangle.
	\end{equation}
	
\end{iterative}
	Then we also define an iteration-dependent norm for the L-scheme as the symmetric part of the bilinear form \eqref{eq: two phase bilinear form Lscheme}
	 \begin{definition}[Iteration dependent norm for L-scheme]  For $\varphi_1,\varphi_2\in H_{0}^{1}(\Omega)$, the iteration-dependent norm for the L-scheme \eqref{eq: twophase Lscheme} is defined by
	\begin{equation}
		\norm{\varphi_1,\varphi_2}_{L,\Theta^{n,i}}=\left(\int_{\Omega}L\varphi_1^{2}+\tau|\del\varphi_1|^{2}+\t|(\k{\lambda_{t}}(s(\Theta^{n,i})))^{\frac{1}{2}}\del\varphi_2|^{2}\right)^{\frac{1}{2}}.
	\end{equation}
\end{definition}	
			Newton's method can be written as: Given $\Theta_{h}^{n-1},\Theta_{h}^{k},P_{h}^{k}\in Q_h$ find $\Theta_{h}^{k+1},P_{h}^{k+1}\in Q_h$ such that
	\begin{subequations}\label{eq: newtons method}
		\begin{align}
			\begin{split}
				\langle s'(\Theta_{h}^{k})(\Theta_{h}^{k+1}-\Theta_{h}^{k}),q_h\rangle+\langle s(\Theta_{h}^{k})-s(\Theta_{h}^{n-1}),q_h\rangle +\tau\langle\del\Theta_{h}^{k+1},\del q_h\rangle&\\
				+\tau\langle (f_w\circ s)'(\Theta_{h}^{k})\k{\lambda_{t}}(s(\Theta_{h}^{k}))\del (P_{h}^{k})\delta \Theta_{h}^{k+1},\del q_h\rangle
				\\+\tau\langle f_w(s(\Theta_{h}^{k}))\k(\lambda\circ s)'(\Theta_{h}^{k}))\del (P_{h}^{k})\delta \Theta_{h}^{k+1},\del q_h\rangle\\+\tau\langle f_w(s(\Theta_{h}^{k}))\k{\lambda_{t}}(s(\Theta_{h}^{k}))\del P_{h}^{k+1},\del q_h\rangle&=\tau\langle f_1^{n},q_h\rangle,
			\end{split}\\
			\tau\langle \k(\lambda\circ s)'(\Theta_{h}^{k})\del (P_{h}^{k}) \delta \Theta_{h}^{k+1},\del r_h\rangle+\tau\langle \k{\lambda_{t}}(s(\Theta_{h}^{k}))\del P_{h}^{k+1},\del r_h\rangle&=\t\langle f_2^{n},r_h\rangle,
		\end{align}
	\end{subequations}
    for all $ q_h, r_h\,\in Q_h$.
	Similarly, we define the bilinear form for Newton's method 
	\begin{equation}\label{eq: two phase flow bilinear form Newton}
		\begin{aligned}
			\mathcal{B}_{N,\Theta_{h}^{k}}((\varphi_1,\varphi_2),(q_h,r_h)):=&\langle s'(\Theta_{h}^{k})\varphi_1,q_h\rangle+\tau\langle (f_w\circ s)'(\Theta_{h}^{k})\k{\lambda_{t}}(s(\Theta_{h}^{k}))\del(P_{h}^{k}) \delta \varphi_1,\del q_h\rangle\\
			&+\tau\langle f_w(s(\Theta_{h}^{k}))\k(\lambda\circ s)'(\Theta_{h}^{k}))\del (P_{h}^{k})\delta \varphi_1,\del q_h\rangle+\tau\langle \del \varphi_1,\del q_h\rangle\\
			&+\tau\langle f_w(s(\Theta_{h}^{k}))\k{\lambda_{t}}(s(\Theta_{h}^{k}))\del\varphi_2,\del q_h\rangle \\
			&+\tau\langle \k(\lambda\circ s)'(\Theta_{h}^{k})\del  (P_{h}^{k})\varphi_1,\del r_h\rangle+ \tau\langle \k{\lambda_{t}}(s(\Theta_{h}^{k}))\del \varphi_2,\del r_h\rangle.
		\end{aligned}
	\end{equation}
We can now write the Newton method as 
\begin{iterative}[Newton's method] For the bilinear form defined in \eqref{eq: two phase flow bilinear form Newton} and the residual in \Cref{def: twophase residual} the Newton's method can be defined as
		\begin{equation}\label{eq: twophase Newton}
		\mathcal{B}_{N,\Theta_{h}^{k}}((\delta\Theta_{h}^{k+1},\delta P_{h}^{k+1}),(q_h,r_h))=-\langle\mathcal{R}(\Theta_{h}^{k},P_{h}^{k}),(q_h,r_h)\rangle.
	\end{equation}
\end{iterative}
\noindent We further define the iteration-dependent norm for Newton's method as  the symmetric part of the bilinear form \eqref{eq: two phase flow bilinear form Newton}
 \begin{definition}[Iteration dependent norm for Newton's method]  For $\varphi_1,\varphi_2\in H_{0}^{1}(\Omega)$, the iteration-dependent norm for Newton's method \eqref{eq: twophase Newton} is defined by
	\begin{equation}\label{eq: twophase newton norm}
		\norm{\varphi_1,\varphi_2}_{N,(\Theta_{h}^{k},P_{h}^{k})}=\left(\int_{\Omega}s'(\Theta_{h}^{k})\varphi_1^{2}+\tau|\del\varphi_1|^{2}+\tau|(\k{\lambda_{t}}(s(\Theta_{h}^{k})))^{\frac{1}{2}}\del\varphi_2|^{2}\right)^{\frac{1}{2}}.
	\end{equation}
\end{definition}

	\subsection{Estimators}\label{sec: twophase estimators}
	Based on the previous section we have an error measure for the incremental error of both Newton's method and the L-scheme in an iteration-dependent norm. Now we can derive \textit{a posteriori} estimators in a similar fashion to the steps outlined previously. We will need the following assumptions.
	
	\begin{assumption}\label{ass: 1}
		For a $k\in\mathbb{N}$, there exists a constant $C_{1}^{k}\in [0,2)$ such that
		\begin{equation*}
			\tau  \left|\left((f_w\circ s)'(\Theta_{h}^{k})\k{\lambda_{t}}(s(\Theta_{h}^{k}))+f_w(s(\Theta_{h}^{k}))\k(\lambda\circ s)'(\Theta_{h}^{k})\right)\del P_{h}^{k}\right|^{2}\leq \left(C_{1}^{k}\right)^{2}s'(\Theta_{h}^{k})/4,
		\end{equation*}
		almost everywhere in $\Omega$.
	\end{assumption}
	\begin{assumption}\label{ass: 2}
		For a $k\in\mathbb{N}$, there exists a constant $C_{2}^{k}\in [0,2)$ such that
		\begin{equation*}
			 \tau\left|(\k{\lambda_{t}}(s(\Theta_{h}^{k})))^{-\frac{1}{2}}(\lambda\circ s)'(\Theta_{h}^{k})\del P_{h}^{k}\right|^{2}\leq \left(C_{2}^{k}\right)^{2}s'(\Theta_{h}^{k})/4,
		\end{equation*}
		almost everywhere in $\Omega$.
	\end{assumption}
        Observe that the constants in \Cref{ass: 1} and \Cref{ass: 2} are fully computable as both $\Theta_h^k$ and $P_h^k$ are known. The assumptions hold if the numerical fluxes are bounded and the time step size $\t$ is small. In addition, the inequalities are always satisfied in the degenerate case as $s'(\Theta_h^k)=0$. For all of the estimators we also assume that the fractional flow function satisfies $|f_w|\leq C_{\!_{f_w}}<1$, meaning there is always some residual non-wetting fluid.
	\begin{lemma}[L-scheme to Newton estimator]\label{lem: twophase L to N}
			Let \Cref{ass: 1} and \Cref{ass: 2} hold, and the fractional flow function be bounded by $C_{\!_{f_w}}<1$. Let $\{\Theta_{h}^{k},P_{h}^{k}\}$ be a sequence of iterates generated using the L-scheme \eqref{eq: twophase Lscheme}. Then, if $\,\hat{\Theta}^{k+1},\hat{P}^{k+1}$ are computed using Newton's method \eqref{eq: twophase Newton}, the incremental error satisfies
		\begin{equation}
			\norm{(\hat{\Theta}^{k+1}-\Theta_{h}^{k},\hat{P}^{k+1}-P_{h}^{k})}_{N,(\Theta_{h}^{k},P_{h}^{k})}\leq \eta_{\!_{\,1\to 2}}
		\end{equation}
		where
		\begin{align}\label{eq: twophase est 1to2}
			\eta_{\!_{\,1\to 2}}=\frac{2}{2-\max(C_1^{k},C_2^{k},2C_{\!_{f_w}})}\left([\eta_s]^{2}+\tau[\eta_\Theta]^{2}+\t[\eta_{\lambda_{t}}]^{2}\right)^{\frac{1}{2}}
		\end{align}
		with
		\begin{equation*}
			\begin{aligned}
				\eta_\Theta^{k}=&\|\left(f_w(s(\Theta_{h}^{k}))\k{\lambda_{t}}(s(\Theta_{h}^{k}))-f_w(s(\Theta_{h}^{k-1}))\k{\lambda_{t}}(s(\Theta_{h}^{k-1}))\right)\del P_{h}^{k}\|,\\
				\eta_s^{k}=&\|s'(\Theta_{h}^{k})^{-\frac{1}{2}}(L(\Theta_{h}^{k}-\Theta_{h}^{k-1})-(s(\Theta_{h}^{k})-s(\Theta_{h}^{k-1}))\|,\\
				\eta_{\lambda_{t}}^{k}=&\|(\k{\lambda_{t}}( s(\Theta_{h}^{k})))^{-\frac{1}{2}}\left(\k{\lambda_{t}}( s(\Theta_{h}^{k}))-\k{\lambda_{t}} (s(\Theta_{h}^{k-1}))\right)\del P_{h}^{k}\|.
			\end{aligned}
		\end{equation*}
	\end{lemma}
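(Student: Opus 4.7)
The plan is to apply the four-step framework of \Cref{sec: framework} to the Newton bilinear form $\mathcal{B}_{N,\Theta_h^k}$ and the residual of \Cref{def: twophase residual}, using the Newton increment $\delta\hat\Theta^{k+1} := \hat\Theta^{k+1} - \Theta_h^k$, $\delta\hat P^{k+1} := \hat P^{k+1} - P_h^k$ itself as test function. Testing \eqref{eq: twophase Newton} against its own increment and using \eqref{eq: framework step 1} gives
\[
\norm{\delta\hat\Theta^{k+1}, \delta\hat P^{k+1}}_{N,(\Theta_h^k, P_h^k)}^2 = -\langle \mathcal{R}(\Theta_h^k, P_h^k), (\delta\hat\Theta^{k+1}, \delta\hat P^{k+1})\rangle - \mathcal{B}_N^{non\text{-}sym},
\]
where $\mathcal{B}_N^{non\text{-}sym}$ denotes the non-symmetric part of $\mathcal{B}_N$ applied to the diagonal. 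The key input from the hypothesis that $(\Theta_h^k, P_h^k)$ came from \eqref{eq: twophase Lscheme} is that $-\langle \mathcal{R}(\Theta_h^{k-1}, P_h^{k-1}), (q,r)\rangle = \mathcal{B}_{L,\Theta_h^{k-1}}((\delta\Theta_h^k, \delta P_h^k),(q,r))$; adding and subtracting $\mathcal{R}(\Theta_h^{k-1}, P_h^{k-1})$ splits the residual action into a telescoping difference $\mathcal{R}(\Theta_h^k) - \mathcal{R}(\Theta_h^{k-1})$ tested against $(\delta\hat\Theta^{k+1},\delta\hat P^{k+1})$ plus $\mathcal{B}_{L,\Theta_h^{k-1}}((\delta\Theta_h^k, \delta P_h^k),(\delta\hat\Theta^{k+1}, \delta\hat P^{k+1}))$.

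For Step 3 I expand both contributions using \Cref{def: twophase residual} and \eqref{eq: two phase bilinear form Lscheme}. The Laplacian terms $\tau\langle\nabla\delta\Theta_h^k,\nabla\delta\hat\Theta^{k+1}\rangle$ cancel exactly. Of the three remaining groups, the saturation residue combines to $\langle L\delta\Theta_h^k - (s(\Theta_h^k)-s(\Theta_h^{k-1})), \delta\hat\Theta^{k+1}\rangle$; the upwind-flux residue, after writing $F_k\nabla P_h^k - F_{k-1}\nabla P_h^{k-1} = (F_k-F_{k-1})\nabla P_h^k + F_{k-1}\nabla\delta P_h^k$ with $F_j := f_w(s(\Theta_h^j))\kappa\lambda_t(s(\Theta_h^j))$, cancels the $F_{k-1}\nabla\delta P_h^k$ piece against its $\mathcal{B}_{L,\Theta_h^{k-1}}$ counterpart and leaves $\tau\langle (F_k-F_{k-1})\nabla P_h^k, \nabla\delta\hat\Theta^{k+1}\rangle$; and the mobility-flux residue collapses analogously to $\tau\langle (\kappa\lambda_t(s(\Theta_h^k)) - \kappa\lambda_t(s(\Theta_h^{k-1})))\nabla P_h^k, \nabla\delta\hat P^{k+1}\rangle$. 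Weighted Cauchy--Schwarz with weights $s'(\Theta_h^k)^{1/2}$, unweighted, and $(\kappa\lambda_t(s(\Theta_h^k)))^{-1/2}$ respectively produces the estimator pieces $\eta_s^k$, $\sqrt{\tau}\eta_\Theta^k$ and $\sqrt{\tau}\eta_{\lambda_t}^k$, each paired with one of the three components of the Newton norm; a final discrete Cauchy--Schwarz in $\mathbb{R}^3$ packages the sum as $\sqrt{(\eta_s^k)^2 + \tau(\eta_\Theta^k)^2 + \tau(\eta_{\lambda_t}^k)^2}\, \norm{\delta\hat\Theta^{k+1}, \delta\hat P^{k+1}}_{N,(\Theta_h^k,P_h^k)}$.

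Step 4 handles the three non-symmetric cross-terms of $\mathcal{B}_N$ on the diagonal: the Jacobian correction $\tau\langle[(f_w\circ s)'\kappa\lambda_t + f_w\kappa(\lambda\circ s)']\nabla P_h^k\, \delta\hat\Theta^{k+1}, \nabla\delta\hat\Theta^{k+1}\rangle$ is absorbed using \Cref{ass: 1} and Young's by $(C_1^k/4)\norm{\delta\hat\Theta^{k+1},\delta\hat P^{k+1}}_N^2$; $\tau\langle\kappa(\lambda\circ s)'\nabla P_h^k\, \delta\hat\Theta^{k+1}, \nabla\delta\hat P^{k+1}\rangle$ is absorbed using \Cref{ass: 2} and Young's by $(C_2^k/4)\norm{\delta\hat\Theta^{k+1},\delta\hat P^{k+1}}_N^2$; and the mobility coupling $\tau\langle f_w\kappa\lambda_t\nabla\delta\hat P^{k+1}, \nabla\delta\hat\Theta^{k+1}\rangle$ is controlled using $|f_w|\leq C_{\!_{f_w}}$ with a symmetric split $\kappa\lambda_t = \sqrt{\kappa\lambda_t}\cdot\sqrt{\kappa\lambda_t}$ and Young's by $(C_{\!_{f_w}}/2)\norm{\delta\hat\Theta^{k+1},\delta\hat P^{k+1}}_N^2$. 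Their sum is at most $\max(C_1^k, C_2^k, 2C_{\!_{f_w}})/2$ times the norm squared. Substituting into the Step 1 identity and rearranging yields the claimed estimator $\eta_{1\to 2}$.

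The principal obstacle will be Step 4: Newton's bilinear form contains Jacobian cross-terms that pair the two components of the increment with weights different from those in the Newton norm, so one has to split the mobility factor carefully and choose the Young's constants so that the three heterogeneous prefactors arising from $C_1^k$, $C_2^k$, and $C_{\!_{f_w}}$ combine into a single $\max(\cdot,\cdot,\cdot)/2$. The factor $2$ in front of $C_{\!_{f_w}}$ arises precisely because the mobility coupling mixes the two components symmetrically, so Young's distributes $\kappa\lambda_t$ evenly between them, whereas the Jacobian corrections carry the asymmetric $s'(\Theta_h^k)^{1/2}$ weight native to the assumptions. A subsidiary issue, flagged in \Cref{sec: comput considerations}, is that several of the weighted Cauchy--Schwarz steps divide by $\sqrt{s'(\Theta_h^k)}$ or $\sqrt{\kappa\lambda_t(s(\Theta_h^k))}$, which is only meaningful on the support of these weights — consistent with the authors' convention that estimators are computed only on their region of validity.
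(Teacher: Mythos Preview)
Your proposal is correct and follows exactly the paper's four-step argument: the same telescoping in Steps~2--3 (using that $(\Theta_h^k,P_h^k)$ solves \eqref{eq: twophase Lscheme}) to produce the three estimator pieces $\eta_s,\eta_\Theta,\eta_{\lambda_t}$, and the same treatment in Step~4 of the three non-symmetric cross-terms $\Gamma_3,\Gamma_4,\Gamma_5$ via \Cref{ass: 1}, \Cref{ass: 2}, and $|f_w|\le C_{\!_{f_w}}$. One small bookkeeping point: to reach the prefactor $\max(C_1^k,C_2^k,2C_{\!_{f_w}})/2$ you must retain the \emph{partial}-norm contributions from each Young's inequality (each cross-term touches only two of the three components $\|s'{}^{1/2}\delta\hat\Theta\|^2$, $\tau\|\nabla\delta\hat\Theta\|^2$, $\tau\|(\kappa\lambda_t)^{1/2}\nabla\delta\hat P\|^2$), since literally summing your three full-norm bounds $(C_1^k/4+C_2^k/4+C_{\!_{f_w}}/2)\norm{\cdot}_N^2$ yields only $3\max(\cdot)/4$; the paper tracks the component coefficients and then bounds each by $\max(\cdot)/2$.
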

	\begin{proof}
    \noindent Step 1. 
		First observe from the defintion of the bilinear form for Newton's method \eqref{eq: two phase flow bilinear form Newton} and Newton's method expressed trough the residual \eqref{eq: twophase Newton} implies that Newtons method \eqref{eq: newtons method}  with $q_h=\delta\Theta_{h}^{k+1}, r_h= \delta P_{h}^{k+1}$ satisfies
		\begin{equation*}
			\begin{aligned}
				\begin{split}
					\langle s'(\Theta_{h}^{k})(\delta\Theta_{h}^{k+1},q_h\rangle +\tau\langle\del\delta\Theta_{h}^{k+1},\del q_h\rangle\\
					+\tau\langle (f_w\circ s)'(\Theta_{h}^{k})\k{\lambda_{t}}(s(\Theta_{h}^{k}))\del (P_{h}^{k})\delta \Theta_{h}^{k+1},\del q_h\rangle+
					\tau\langle f_w(s(\Theta_{h}^{k}))\k(\lambda\circ s)'(\Theta_{h}^{k}))\del (P_{h}^{k})\delta \Theta_{h}^{k+1},\del q_h\rangle\\
					+\tau\langle f_w(s(\Theta_{h}^{k}))\k{\lambda_{t}}(s(\Theta_{h}^{k}))\del \delta P_{h}^{k+1},\del q_h\rangle\\
					=\tau\langle f_1(s),q_h\rangle-\langle s(\Theta_{h}^{k})-s(\Theta_{h}^{n-1}),q_h\rangle-\tau\langle\del\Theta_{h}^{k},\del q_h\rangle-\tau\langle f_w(s(\Theta_{h}^{k}))\k{\lambda_{t}}(s(\Theta_{h}^{k}))\del P_{h}^{k},\del q_h\rangle,
				\end{split}\\
				\tau\langle \k(\lambda\circ s)'(\Theta_{h}^{k})\del (P_{h}^{k})\delta \Theta_{h}^{k+1},\del r_h\rangle+\tau\langle \k{\lambda_{t}}(s(\Theta_{h}^{k}))\del \delta P_{h}^{k+1},\del r_h\rangle=\tau\langle f_2(s),r_h\rangle-\tau\langle \k{\lambda_{t}}(s(\Theta_{h}^{k}))\del P_{h}^{k},\del r_h\rangle.
			\end{aligned}
		\end{equation*}
		Observe that the left-hand side contains the norm defintion \eqref{eq: twophase newton norm} and the non-symmetric part of the bilinear form for Newton similar to \eqref{eq: framework step 1}. Therefore, we can express the norm as follows
		\begin{align*}
			&\norm{\delta\Theta_{h}^{k+1},\delta P_{h}^{k+1}}_{N,(\Theta_{h}^{k},P_{h}^{k})}^{2}= \left(\int_{\Omega}s'(\Theta_{h}^{k})|\delta\Theta_{h}^{k+1}|^{2}+\tau|\del\delta\Theta_{h}^{k+1}|^{2}+|(\k{\lambda_{t}}(s(\Theta_{h}^{k})))^{\frac{1}{2}}\del \delta P_{h}^{k+1}|^{2}\right)\\
			&=\,\underbrace{\tau\langle f_1(s),q_h\rangle-\langle s(\Theta_{h}^{k})-s(\Theta_{h}^{n-1}),\delta\Theta_{h}^{k+1}\rangle-\tau\langle\del\Theta_{h}^{k},\del \delta\Theta_{h}^{k+1}\rangle-\tau\langle f_w(s(\Theta_{h}^{k}))\k{\lambda_{t}}(s(\Theta_{h}^{k}))\del P_{h}^{k},\del \delta\Theta_{h}^{k+1}\rangle}_{:=\Gamma_1}
			\\
			&+\underbrace{\tau\langle f_2(s),\delta P_{h}^{k+1}\rangle-\tau\langle \k{\lambda_{t}}(s(\Theta_{h}^{k}))\del P_{h}^{k},\del \delta P_{h}^{k+1}\rangle}_{\Gamma_2}
			\\
			&-\underbrace{\tau\langle (f_w\circ s)'(\Theta_{h}^{k})\k{\lambda_{t}}(s(\Theta_{h}^{k}))\del (P_{h}^{k}) \delta \Theta_{h}^{k+1},\del \delta\Theta_{h}^{k+1}\rangle-
				\tau\langle f_w(s(\Theta_{h}^{k}))\k(\lambda\circ s)'(\Theta_{h}^{k})\del(P_{h}^{k})  \delta \Theta_{h}^{k+1},\del \delta\Theta_{h}^{k+1}\rangle}_{:=\Gamma_3}
				\\
				&-\underbrace{\tau\langle \k(\lambda\circ s)'(\Theta_{h}^{k})\del (P_{h}^{k})\delta \Theta_{h}^{k+1},\del \delta P_{h}^{k+1}\rangle}_{\Gamma_{4}}-\underbrace{\tau\langle f_w(s(\Theta_{h}^{k}))\k{\lambda_{t}}(s(\Theta_{h}^{k}))\del \delta P_{h}^{k+1},\del \delta \Theta_{h}^{k+1}\rangle}_{\Gamma_{5}}.
 		\end{align*}
		 \noindent Step 2.  We consider both $\Gamma_1$ and $\Gamma_2$ which corresponds to only considering the residual, see \Cref{def: twophase residual}. Then by using \eqref{eq: twophase Lscheme}, i.e. adding $\mathcal{B}_{L,\Theta_{h}^{k}}((\delta\Theta_{h}^{k},\delta P_{h}^{k}),(\delta\Theta_{h}^{k+1},\delta P_{h}^{k+1}))+\langle\mathcal{R}(\Theta_{h}^{k-1},P_{h}^{k-1}),(\delta\Theta_{h}^{k+1},\delta P_{h}^{k+1})\rangle$, we see that 
         \begin{align*}
			\Gamma_1+\Gamma_2 =& \langle L\delta\Theta_{h}^{k},\delta\Theta_{h}^{k+1}\rangle- \langle s(\Theta_{h}^{k})-s(\Theta_{h}^{k-1}),\delta\Theta_{h}^{k+1}\rangle\\
			&- \tau\langle \left(f_w(s(\Theta_{h}^{k}))\k{\lambda_{t}}(s(\Theta_{h}^{k}))-f_w(s(\Theta_{h}^{k-1}))\k{\lambda_{t}}(s(\Theta_{h}^{k-1}))\right)\del P_{h}^{k},\del \delta\Theta_{h}^{k+1}\rangle\\
			&-\tau\langle \left(\k{\lambda_{t}}( s(\Theta_{h}^{k}))-\k{\lambda_{t}} (s(\Theta_{h}^{k-1}))\right)\del P_{h}^{k},\del\delta P_{h}^{k+1}\rangle.
		\end{align*}
        \noindent Step 3. We now aim to recover the iteration-dependent norm. By division and multiplication with the factors $s'(\Theta^{k})$ and $(\k{\lambda_{t}}( s(\Theta_{h}^{k})))$ along with the Cauchy-Scwarz inequality we have that
        \begin{align*}
            \Gamma_1+\Gamma_2\leq&\, \|s'(\Theta^{k})^{-\frac{1}{2}}(L(\Theta_{h}^{k}-\Theta_{h}^{k-1})-(s(\Theta_{h}^{k})-s(\Theta_{h}^{k-1}))\|\|s'(\Theta)^{\frac{1}{2}}\delta\Theta_{h}^{k+1}\|\\
			&+\tau\|\left(f_w(s(\Theta_{h}^{k}))\k{\lambda_{t}}(s(\Theta_{h}^{k}))-f_w(s(\Theta_{h}^{k-1}))\k{\lambda_{t}}(s(\Theta_{h}^{k-1}))\right)\del P_{h}^{k}\|\|\del\delta\Theta_{h}^{k+1}\|\\
            &+\tau\|(\k{\lambda_{t}}( s(\Theta_{h}^{k})))^{-\frac{1}{2}}\left(\k{\lambda_{t}}( s(\Theta_{h}^{k}))-\k{\lambda_{t}} (s(\Theta_{h}^{k-1}))\right)\del P_{h}^{k}\|\|(\k{\lambda_{t}}( s(\Theta_{h}^{k})))^{\frac{1}{2}}\del\delta P_{h}^{k+1}\|.
        \end{align*}
		Further, by using the Cauchy-Schwarz inequality we obtain
		\begin{align}\label{eq: two phase proof step 3}
        \begin{split}
			\Gamma_1+\Gamma_2\leq&\, \eta_s\|(s'(\Theta_{h}^{k}))^{\frac{1}{2}}\delta\Theta_{h}^{k+1}\|+\sqrt{\tau}\eta_\Theta\sqrt{\tau}\|\del\delta\Theta_{h}^{k+1}\|+\sqrt{\tau}\eta_{\lambda_{t}}\sqrt{\tau}\|\k{\lambda_{t}}( s(\Theta_{h}^{k}))^{\frac{1}{2}}\del\delta P_{h}^{k+1}\| \\
			\leq& ([\eta_s]^{2}+\tau[\eta_\Theta]^{2}+\t[\eta_{\lambda_{t}}]^{2})^{\frac{1}{2}}\\&\cdot\left(\|(s'(\Theta_{h}^{k}))^{\frac{1}{2}}\delta\Theta_{h}^{k+1}\|^{2}+\tau\|\del\delta\Theta_{h}^{k+1}\|^{2}+\t\|(\k{\lambda_{t}}( s(\Theta_{h}^{k})))^{\frac{1}{2}}\del\delta P_{h}^{k+1}\|^{2}\right)\\
			\leq& ([\eta_s^{k}]^{2}+\tau[\eta_\Theta^{k}]^{2}+\t[\eta_{\lambda_{t}}^{k}]^{2})^{\frac{1}{2}}\norm{\delta\Theta_{h}^{k+1},\delta P_{h}^{k+1}}_{N,(\Theta_{h}^{k},P_{h}^{k})}.
            \end{split}
		\end{align}
		\noindent Step 4. Finally, we look at the non-symmetric part of the bilinear form, i.e. the terms $\Gamma_3$, $\Gamma_4$, and $\Gamma_5$. First we look at $\Gamma_3$, and simplify the notation by denoting $\bm{a}^{k}=\left((f_w\circ s)'(\Theta_{h}^{k})\k{\lambda_{t}}(s(\Theta_{h}^{k}))+f_w(s(\Theta_{h}^{k}))\k(\lambda\circ s)'(\Theta_{h}^{k})\right)\del P_{h}^{k}$. Then by using \Cref{ass: 1} we estimate that
		\begin{align*}
			\Gamma_3 =&\, \tau\left\langle \bm{a}^{k}\delta\Theta_{h}^{k+1}, \del \delta\Theta_{h}^{k+1}\right\rangle\\
			\leq&\,\left(\tau\int_{\Omega}|\bm{a}^{k}|^{2}(\delta\Theta_{h}^{k+1})^{2}\right)^{\frac{1}{2}}\left(\tau\int_{\Omega}|\del\delta\Theta_{h}^{k+1}|^{2}\right)^{\frac{1}{2}}\\
			\leq&\, C_1^{k}\left(\int_{\Omega}\frac{1}{4}s'(\Theta_{h}^{k})(\delta\Theta_{h}^{k+1})^{2}\right)^{\frac{1}{2}}\left(\tau\int_{\Omega}|\del\delta\Theta_{h}^{k+1}|^{2}\right)^{\frac{1}{2}}
			\\
			\leq&\, \frac{C_1^{k}}{2}\int_\Omega \left(\frac{1}{2} s'(\Theta_{h}^{k})(\delta\Theta_{h}^{k+1})^{2}+\frac{1}{2}\tau|\del\delta\Theta_{h}^{k+1}|^{2}	\right).		
		\end{align*}
	Next we need to estimate $\Gamma_{4}$, to simplfy the expression we introduce $\bm{b}^{k}=(\k{\lambda_{t}}(s(\Theta_{h}^{k})))^{-\frac{1}{2}}(\lambda\circ s)'(\Theta_{h}^{k})\del P_{h}^{k}$. Similar to $\Gamma_3$ by using \Cref{ass: 2} we obtain 
	\begin{align*}
		\Gamma_4 =&\, \tau\left\langle \bm{b}^{k}\delta \Theta_{h}^{k+1}, \del \delta P_{h}^{k+1}\right\rangle\\
		\leq&\,\left(\tau\int_{\Omega}|(\k{\lambda_{t}}(s(\Theta_{h}^{k})))^{-\frac{1}{2}}\bm{b}^{k}|^{2}(\delta\Theta_{h}^{k+1})^{2}\right)^{\frac{1}{2}}\left(\t\int_{\Omega}|(\k{\lambda_{t}}(s(\Theta_{h}^{k})))^{\frac{1}{2}}\del\delta P_{h}^{k+1}|^{2}\right)^{\frac{1}{2}}\\
		\leq&\, C_2^{k}\left(\int_{\Omega}\frac{1}{4}s'(\Theta_{h}^{k})(\delta\Theta_{h}^{k+1})^{2}\right)^{\frac{1}{2}}\left(\t\int_{\Omega}|(\k{\lambda_{t}}(s(\Theta_{h}^{k})))^{\frac{1}{2}}\del\delta P_{h}^{k+1}|^{2}\right)^{\frac{1}{2}}
		\\
		\leq&\, \frac{C_2^{k}}{2}\int_\Omega \left(\frac{1}{2}s'(\Theta_{h}^{k})(\delta\Theta_{h}^{k+1})^{2}+\frac{1}{2}\t|(\k{\lambda_{t}}(s(\Theta_{h}^{k})))^{\frac{1}{2}}\del\delta P_{h}^{k+1}|^{2}	\right).		
	\end{align*}
    Finally, $\Gamma_5$ can be bounded in a similar way to the previous ones using the assumption that the fractional flow function satisfies $|f_w|\leq C_{\!_{f_w}}<1$. We then get that
    \begin{align*}
        \Gamma_5\leq \, \frac{2C_{\!_{f_w}}}{2}\int_\Omega \left(\frac{1}{2}\tau|\del\delta\Theta_{h}^{k+1}|^{2}+\frac{1}{2}\t|(\k{\lambda_{t}}(s(\Theta_{h}^{k})))^{\frac{1}{2}}\del\delta P_{h}^{k+1}|^{2}	\right).
    \end{align*}
	Now combining the estimates for $\Gamma_3$, $\Gamma_4$, and $\Gamma_5$ yields
	\begin{align*}
		\Gamma_3+\Gamma_4+\Gamma_5 \leq \frac{\max(C_1^{k},C_2^{k},2C_{\!_{f_w}})}{2}\norm{\delta\Theta_{h}^{k+1},\delta P_{h}^{k+1}}_{N,(\Theta_{h}^{k},P_{h}^{k})}^{2},
	\end{align*}
	from which we can conclude that the estimate holds.
	\end{proof}
	
		\begin{lemma}[Newton to Newton estimator]
		Let \Cref{ass: 1} and \Cref{ass: 2} hold, and the fractional flow function be bounded by $C_{\!_{f_w}}<1$. Let $\{\Theta_{h}^{k},P_{h}^{k}\}$ be a sequence of iterates generated using Newton's method \eqref{eq: twophase Newton}. Then, if $\hat{\Theta}^{k+1},\hat{P}^{k+1}$ are also computed using Newton's method, the incremental error satisfies
		\begin{equation}
			\norm{(\hat{\Theta}^{k+1}-\Theta_{h}^{k},\hat{P}^{k+1}-P_{h}^{k})}_{N,(\Theta_{h}^{k},P_{h}^{k})}\leq \eta_{\!_{2\to 2}}
		\end{equation}
		where
		\begin{align}\label{eq: twophase est 2to2}
			\eta_{\!_{2\to 2}}=\frac{2}{2-\max(C_1^{k},C_2^{k},2C_{\!_{f_w}})}\left([\eta_s]^{2}+\tau[\eta_\Theta]^{2}+\t[\eta_{\lambda_{t}}]^{2}\right)^{\frac{1}{2}}
		\end{align}
		with
		\begin{equation*}
			\begin{aligned}
				\eta_\Theta^{k}=&\left\|\begin{split}&\left(f_w(s(\Theta_{h}^{k}))\k{\lambda_{t}}(s(\Theta_{h}^{k}))-f_w(s(\Theta_{h}^{k-1}))\k{\lambda_{t}}(s(\Theta_{h}^{k-1}))\right)\del P_{h}^{k}\\
                &{}-\left((f_w\circ s)'(\Theta_{h}^{k-1})\k{\lambda_{t}}(s(\Theta_{h}^{k-1}))+f_w(s(\Theta_{h}^{k-1}))\k(\lambda\circ s)'(\Theta_{h}^{k-1})\del P_{h}^{k-1}\delta\Theta_{h}^{k}\right)\end{split}\right\|,\\
				\eta_s^{k}=&\|s'(\Theta_{h}^{k})^{-\frac{1}{2}}(s'(\Theta_{h}^{k-1})(\Theta_{h}^{k}-\Theta_{h}^{k-1})-(s(\Theta_{h}^{k})-s(\Theta_{h}^{k-1}))\|,\\
				\eta_{\lambda_{t}}^{k}=&\|(\k{\lambda_{t}}( s(\Theta_{h}^{k})))^{-\frac{1}{2}}\left(\left(\k{\lambda_{t}}( s(\Theta_{h}^{k}))-\k{\lambda_{t}} (s(\Theta_{h}^{k-1}))\right)\del P_{h}^{k}-(\lambda\circ s)'(\Theta_{h}^{k-1})\del P_{h}^{k-1}\delta\Theta_{h}^{k}\right)\|.
			\end{aligned}
		\end{equation*}
	\end{lemma}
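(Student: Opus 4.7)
The plan is to mirror the four-step framework already carried out in \Cref{lem: twophase L to N}, with the only structural change occurring in Step~2 where the information about the previous iterate must be exploited. Since the bilinear form $\mathcal{B}_{N,\Theta_h^k}$ and the test choices $q_h=\delta\Theta_h^{k+1}$, $r_h=\delta P_h^{k+1}$ are identical to before, Step~1 is unchanged: I rewrite the Newton iteration \eqref{eq: twophase Newton} to isolate the iteration-dependent norm on the left and obtain the decomposition $\norm{\delta\Theta_h^{k+1},\delta P_h^{k+1}}_{N,(\Theta_h^k,P_h^k)}^2 = \Gamma_1+\Gamma_2-\Gamma_3-\Gamma_4-\Gamma_5$ with the same $\Gamma_i$ as in the proof of \Cref{lem: twophase L to N}.

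The key difference appears in Step~2. Because $\{\Theta_h^k,P_h^k\}$ is generated by Newton's method (rather than the L-scheme), the relation available at the previous iteration is
\begin{equation*}
    \mathcal{B}_{N,\Theta_h^{k-1}}\bigl((\delta\Theta_h^k,\delta P_h^k),(\delta\Theta_h^{k+1},\delta P_h^{k+1})\bigr) = -\langle\mathcal{R}(\Theta_h^{k-1},P_h^{k-1}),(\delta\Theta_h^{k+1},\delta P_h^{k+1})\rangle.
\end{equation*}
Adding this to $\Gamma_1+\Gamma_2$ cancels all source terms and the $\tau\langle\nabla\Theta_h^{k-1},\nabla\delta\Theta_h^{k+1}\rangle$ contribution, leaving exactly three residual-type quantities: the saturation accumulation term $\langle s'(\Theta_h^{k-1})\delta\Theta_h^k - (s(\Theta_h^k)-s(\Theta_h^{k-1})),\delta\Theta_h^{k+1}\rangle$, the flux discrepancy term involving $f_w\kappa\lambda_t$ at $k$ vs.\ $k-1$ plus the Newton linearization correction $\tau\langle\bigl((f_w\circ s)'(\Theta_h^{k-1})\kappa\lambda_t(s(\Theta_h^{k-1})) + f_w(s(\Theta_h^{k-1}))\kappa(\lambda\circ s)'(\Theta_h^{k-1})\bigr)\nabla P_h^{k-1}\delta\Theta_h^k,\nabla\delta\Theta_h^{k+1}\rangle$, and the analogous total-mobility discrepancy for the pressure equation with the correction $\tau\langle\kappa(\lambda\circ s)'(\Theta_h^{k-1})\nabla P_h^{k-1}\delta\Theta_h^k,\nabla\delta P_h^{k+1}\rangle$. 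These three terms are precisely the numerators in the modified $\eta_s^k$, $\eta_\Theta^k$, and $\eta_{\lambda_t}^k$ given in the lemma statement; the extra Newton correction subtracted inside each norm is what distinguishes this estimator from the L-scheme-to-Newton one.

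Steps~3 and~4 then proceed identically to the proof of \Cref{lem: twophase L to N}. In Step~3 I multiply and divide by $(s'(\Theta_h^k))^{1/2}$ and $(\kappa\lambda_t(s(\Theta_h^k)))^{1/2}$ in the appropriate places and apply Cauchy--Schwarz to produce $\Gamma_1+\Gamma_2 \leq \bigl([\eta_s^k]^2+\tau[\eta_\Theta^k]^2+\tau[\eta_{\lambda_t}^k]^2\bigr)^{1/2}\norm{\delta\Theta_h^{k+1},\delta P_h^{k+1}}_{N,(\Theta_h^k,P_h^k)}$. In Step~4 the bounds on $\Gamma_3$, $\Gamma_4$, $\Gamma_5$ depend only on the linearization point $(\Theta_h^k,P_h^k)$ and so reuse verbatim \Cref{ass: 1}, \Cref{ass: 2}, and the bound $|f_w|\leq C_{\!_{f_w}}$, yielding $\Gamma_3+\Gamma_4+\Gamma_5 \leq \tfrac{1}{2}\max(C_1^k,C_2^k,2C_{\!_{f_w}})\norm{\delta\Theta_h^{k+1},\delta P_h^{k+1}}_{N,(\Theta_h^k,P_h^k)}^2$. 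Moving this term to the left and dividing by $1-\tfrac{1}{2}\max(C_1^k,C_2^k,2C_{\!_{f_w}})$ gives the stated prefactor $2/(2-\max(C_1^k,C_2^k,2C_{\!_{f_w}}))$. The main obstacle is purely bookkeeping: one must carefully verify in Step~2 that the extra Jacobian terms arising from $\mathcal{B}_{N,\Theta_h^{k-1}}$ (which did not appear in $\mathcal{B}_{L,\Theta_h^{k-1}}$) combine correctly into the augmented $\eta_\Theta^k$ and $\eta_{\lambda_t}^k$, and that no unpaired pressure-iteration term survives---this is ultimately what allows the new estimator to retain the same multiplicative constant as in the L-scheme case.
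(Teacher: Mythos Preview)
Your proposal is correct and follows exactly the approach the paper intends: the paper itself omits the proof, stating only that it ``is similar to the proof of the previous lemma,'' and your outline spells out precisely how that similarity works, namely that Steps~1, 3, and~4 are unchanged while in Step~2 one adds the Newton relation $\mathcal{B}_{N,\Theta_h^{k-1}}((\delta\Theta_h^k,\delta P_h^k),\cdot)+\langle\mathcal{R}(\Theta_h^{k-1},P_h^{k-1}),\cdot\rangle=0$ instead of the L-scheme relation, producing the extra Jacobian correction terms that appear in the modified $\eta_s^k$, $\eta_\Theta^k$, and $\eta_{\lambda_t}^k$.
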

	\begin{proof}
		The proof is similar to the proof of the previous lemma and is therefore omitted.
	\end{proof}
	
			\begin{lemma}[L-scheme to L-scheme estimator]
		Let the fractional flow function be bounded by $C_{\!_{f_w}}<1$. Let $\{\Theta_{h}^{k},P_{h}^{k}\}$ be a sequence of iterates generated using the L-scheme \eqref{eq: twophase Lscheme}. Then, if $\hat{\Theta}^{k+1},\hat{P}^{k+1}$ are also computed using the L-scheme, the incremental error satisfies
		\begin{equation}
			\norm{(\hat{\Theta}^{k+1}-\Theta_{h}^{k},\hat{P}^{k+1}-P_{h}^{k})}_{L,(\Theta_{h}^{k},P_{h}^{k})}\leq \eta_{\!_{1\to 1}}
		\end{equation}
		where
		\begin{align}\label{eq: twophase est 1to1}
			\eta_{\!_{1\to 1}}=\frac{2}{2-2C_{\!_{f_w}}}\left([\eta_s]^{2}+\tau[\eta_\Theta]^{2}+\t[\eta_{\lambda_{t}}]^{2}\right)^{\frac{1}{2}},
		\end{align}
		with
		\begin{equation*}
			\begin{aligned}
				\eta_\Theta^{k}=&\|\left(f_w(s(\Theta_{h}^{k}))\k{\lambda_{t}}(s(\Theta_{h}^{k}))-f_w(s(\Theta_{h}^{k-1}))\k{\lambda_{t}}(s(\Theta_{h}^{k-1}))\right)\del P_{h}^{k}\|,\\
				\eta_s^{k}=&\|L^{-\frac{1}{2}}(L(\Theta_{h}^{k}-\Theta_{h}^{k-1})-(s(\Theta_{h}^{k})-s(\Theta_{h}^{k-1}))\|,\\
				\eta_{\lambda_{t}}^{k}=&\|(\k{\lambda_{t}}( s(\Theta_{h}^{k})))^{-\frac{1}{2}}\left(\left(\k{\lambda_{t}}( s(\Theta_{h}^{k}))-\k{\lambda_{t}} (s(\Theta_{h}^{k-1}))\right)\del P_{h}^{k}\right)\|.
			\end{aligned}
		\end{equation*}
	\end{lemma}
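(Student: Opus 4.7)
The plan is to follow the four-step strategy from the proof of \Cref{lem: twophase L to N}, since the structure is entirely analogous: the same L-scheme residual from \Cref{def: twophase residual} governs the right-hand side, and the predicted increment is measured in the L-scheme norm. The only structural simplification is that both the current and the predicted iterate are produced by the L-scheme, so the non-symmetric part of $\mathcal{B}_{L,\Theta_{h}^{k}}$ reduces to the single $f_{w}$-weighted cross term; no Jacobian factors appear, hence \Cref{ass: 1} and \Cref{ass: 2} drop out and only $|f_w|\leq C_{\!_{f_w}}<1$ is needed.

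For Step 1, I would test \eqref{eq: twophase Lscheme} (shifted so that the unknown is the predicted pair $(\hat\Theta^{k+1},\hat P^{k+1})$) against $(\hat\Theta^{k+1}-\Theta_{h}^{k},\hat P^{k+1}-P_{h}^{k})$. Splitting $\mathcal{B}_{L,\Theta_{h}^{k}}$ into its symmetric and non-symmetric parts as in \eqref{eq: framework step 1}, the left-hand side becomes $\norm{\hat\Theta^{k+1}-\Theta_{h}^{k},\hat P^{k+1}-P_{h}^{k}}_{L,\Theta_{h}^{k}}^{2}$ plus the single cross term $\tau\langle f_w(s(\Theta_{h}^{k}))\k\lambda_{t}(s(\Theta_{h}^{k}))\del(\hat P^{k+1}-P_{h}^{k}),\del(\hat\Theta^{k+1}-\Theta_{h}^{k})\rangle$, while the right-hand side equals $-\langle\mathcal{R}(\Theta_{h}^{k},P_{h}^{k}),(\hat\Theta^{k+1}-\Theta_{h}^{k},\hat P^{k+1}-P_{h}^{k})\rangle$. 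For Step 2, I would exploit that $(\Theta_{h}^{k},P_{h}^{k})$ itself solves the L-scheme at the previous iteration by adding the identity $\mathcal{B}_{L,\Theta_{h}^{k-1}}((\delta\Theta_{h}^{k},\delta P_{h}^{k}),\cdot)+\langle\mathcal{R}(\Theta_{h}^{k-1},P_{h}^{k-1}),\cdot\rangle=0$ to the right-hand side. The forcing terms $f_{1}^{n},f_{2}^{n}$, the diffusion $\tau\langle\del\Theta_{h}^{k-1},\del\cdot\rangle$, and the previous pressure fluxes telescope and reassemble into exactly the three building blocks appearing in the statement: $L\delta\Theta_{h}^{k}-(s(\Theta_{h}^{k})-s(\Theta_{h}^{k-1}))$, the $f_{w}\k\lambda_{t}$-difference against $\del P_{h}^{k}$, and the $\k\lambda_{t}$-difference against $\del P_{h}^{k}$.

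For Step 3, I would apply Cauchy--Schwarz term by term, multiplying and dividing by the weights that reconstruct the L-scheme norm on the right. The only departure from \Cref{lem: twophase L to N} is in the $s$-difference term, which must be weighted by $L^{1/2}$ instead of $s'(\Theta_{h}^{k})^{1/2}$; this yields the stated $\eta_{s}^{k}=\|L^{-1/2}(L(\Theta_{h}^{k}-\Theta_{h}^{k-1})-(s(\Theta_{h}^{k})-s(\Theta_{h}^{k-1})))\|$, while the other two terms are reweighted exactly as before to produce $\eta_{\Theta}^{k}$ and $\eta_{\lambda_{t}}^{k}$. A final Cauchy--Schwarz in $\mathbb{R}^{3}$ then bounds the residual contribution by $([\eta_{s}^{k}]^{2}+\tau[\eta_{\Theta}^{k}]^{2}+\tau[\eta_{\lambda_{t}}^{k}]^{2})^{1/2}$ times the L-scheme norm. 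For Step 4, I would imitate the $\Gamma_{5}$ estimate from the previous lemma: use $|f_{w}|\leq C_{\!_{f_w}}$, split $\k\lambda_{t}$ symmetrically as $(\k\lambda_{t})^{1/2}(\k\lambda_{t})^{1/2}$, and apply Young's inequality to bound the cross term by $C_{\!_{f_w}}\norm{\hat\Theta^{k+1}-\Theta_{h}^{k},\hat P^{k+1}-P_{h}^{k}}_{L,\Theta_{h}^{k}}^{2}$. Absorbing this into the left-hand side and dividing by the L-scheme norm delivers the claimed prefactor $2/(2-2C_{\!_{f_w}})$.

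The essentially unique delicate point, shared with \Cref{lem: twophase L to N}, lies in Step 4: pairing $(\k\lambda_{t})^{1/2}|\del(\hat\Theta^{k+1}-\Theta_{h}^{k})|$ against the unweighted $\sqrt{\tau}\|\del(\hat\Theta^{k+1}-\Theta_{h}^{k})\|$ entering the L-scheme norm implicitly uses boundedness of the mobility coefficient, exactly as already accepted in the treatment of $\Gamma_{5}$. Beyond this, the argument is a direct transcription of the L-to-Newton proof with $s'(\Theta_{h}^{k})$ in the iteration norm replaced by the stabilization constant $L$ and with Assumptions \ref{ass: 1}--\ref{ass: 2} dropped.
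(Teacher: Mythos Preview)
Your proposal is correct and matches the paper's approach exactly; the paper itself only writes ``The proof again is similar to \Cref{lem: twophase L to N}'', and you have spelled out precisely this transcription, with the weight $L^{1/2}$ replacing $s'(\Theta_h^k)^{1/2}$ in Step 3 and with only the $\Gamma_5$-type cross term surviving in Step 4 so that the prefactor collapses to $2/(2-2C_{\!_{f_w}})$. Your remark on the implicit boundedness of $\k\lambda_t$ in the $\Gamma_5$ estimate is also a fair observation about both proofs.
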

	\begin{proof}
		The proof again is similar to \Cref{lem: twophase L to N}.
	\end{proof}
    \begin{remark}[Extension to mixed formulations]
        To extend the estimators above to a mixed formulation, the steps are very similar. The main difference is that there are now additional non-symmetric terms in the bilinear form. These can be controlled by choosing the test function in the mixed finite element equivalent of \eqref{eq: two phase flow bilinear form Newton} such that the coupling terms in the bilinear form cancel. It does lead to additional terms in the residual, but here we can recover the iteration-dependent norm. 
    \end{remark}
	
		\subsection{Adaptive algorithms}
	 By considering the estimates in \Cref{sec: twophase estimators} with the estimators  $\eta_{\!_{1\to 2}}$, $\eta_{\!_{2\to 2}}$  and $\eta_{\!_{1\to 1}}$  defined in \eqref{eq: twophase est 1to2}, \eqref{eq: twophase est 2to2} and \eqref{eq: twophase est 1to1}. We propose two adaptive iterative algorithms, a switching algorithm between the L-scheme and Newton's method, and an adaptive tuning of the stabilization parameter in the L-scheme. Also, recall the additional computational considerations in \Cref{sec: comput considerations}. The algorithms are then:
	\begin{algorithm}[H]
		\caption{L-scheme/Newton \it{a-posteriori} switching}\label{alg: a-posteriori 1}
		\begin{algorithmic}
			\Require $\Theta^{n,0}, P^{n,0}$ as initial guess. 
			\Ensure Scheme=\fbox{L-scheme}
			\For{k=1,2,..}
			\If{Scheme=\fbox{L-scheme}} 
			\State
			Compute iterate using  \fbox{L-scheme}, i.e. \eqref{eq: twophase Lscheme}        
			\If{$\eta_{\!_{1\to 2}}^k\leq \eta^k_{\!_{\,inc,1}}$}
			\State Set Scheme=\fbox{Newton}
			\EndIf
			\Else 
			\State \  Compute iterate using \fbox{Newton}, i.e. \eqref{eq: twophase Newton}
			\If{$\eta_{\!_{2\to 2}}^k> \eta^k_{\!_{\,inc,2}}$}
			\State Set Scheme=\fbox{L-scheme}
			\EndIf
			\EndIf
			\EndFor

		\end{algorithmic}
	\end{algorithm}
	
		\begin{algorithm}[H]
		\caption{L-scheme adaptive stabilization parameter}\label{alg: a-posteriori 3}
		\begin{algorithmic}
			\Require $\Theta^{n,0}, P^{n,0}$ as initial guess. 
			\Ensure Scheme=\fbox{L-scheme}, $C_{\!_{1\to 1}}=\sqrt{2}$
			\For{k=1,2,..}
			\State
			Compute iterate using  \fbox{L-scheme}, i.e. \eqref{eq: twophase Lscheme}        
			\If{$\eta_{\!_{inc,1}}^{k}\geq \eta_{\!_{1\to 1}}^k\geq  0.8\eta_{\!_{\,inc,1}}^{k}$}
			\State Decrease size of $L=0.8L$
			
			\EndIf
			\If{$\eta_{\!_{1\to 1}}^k> \eta_{\!_{\,inc,1}}^{k}$}
			\State Increase size of $L=C_{\!_{1\to 1}}L$
			\EndIf
			\EndFor

		\end{algorithmic}
	\end{algorithm}

    	\subsection{Numerical example}
	Here, we perform a numerical experiment to test the performance of the proposed algorithms. The adaptive L-scheme (\Cref{alg: a-posteriori 3}) will be referred to as the $L-A$ scheme, and the algorithm for switching between Newton and the L-scheme (\Cref{alg: a-posteriori 1}) will be referred to as the $L-N$ scheme. Since the L-scheme for this problem is known to have performance issues for small H\"older exponents, we consider a problem similar to \cite{radu_robust_2018} where we decrease the H\"older exponent. We use the following constitutive equations 
    \begin{equation}
		\lambda_w=s^{\gamma},\, \lambda_n=1-s^{\gamma},\,s(\Theta)=\Theta^{\gamma},
	\end{equation}
    and let $k=10^{-5}$. This choice leads to the relations
    \begin{equation}
		\lambda_t(s)=s^{\gamma}+(1-s)^{\gamma},\quad f_w(s)=\frac{s^{\gamma}}{s^{\gamma}+(1-s)^{\gamma}}.
	\end{equation}
    Further, the source terms are set to zero.
    Let $\Omega=(0,1)^{2}$ be the unit square. We consider an initial saturation of 0.2 in the entire domain, except for in two regions. Near the bottom of the domain the elements bordering $y=0$ we have an initial saturation of $s=0.6$ and within the circle $\{(x-0.5)^{2}+(y-0.5)^{2}\leq 0.1\}$ is equal to zero. This is to ensure that we have a degeneracy at the beginning, where the estimators are only computed outside of this region. Note that $s(\Theta)$ is only H\"older continuous in this region. No-flow boundary conditions are used for the saturation. For the global pressure, we consider $P=1$ at the bottom and $P=0$ at the top, along with homogeneous Neumann conditions at the left and right boundaries of the domain. This means that there is a flow from the bottom to the top.

    For the L-scheme, we consider two different stabilization parameters $L_1=1$ and $L_2=10$. Since the analysis presented for the H\"older continuous case \cite{radu_robust_2018} only guarantees convergence for $L$ large enough depending on the H\"older exponent. We stop the iteration process when 
    \begin{equation}
        \|\Theta_h^{k}-\Theta_h^{k-1}\|\leq 10^{-6},\mbox{ and }\|P_h^{k}-P_h^{k-1}\|\leq 10^{-6}.
    \end{equation}
	\subsubsection{Comparison of convergence properties}

    Here, we discuss the performance of the proposed algorithms and compare them to the L-scheme and Newton's method. The number of iterations for the different methods for $\gamma\in\{0.5,0.9\}$ is presented in \Cref{tab: twophase}. The L-scheme for the smaller stabilization parameter only converges for $\gamma\geq 0.7$, whereas for the larger value $L_2$ it converges for all $\gamma$ considered. Both require more iterations to reach convergence than all other methods, with $L_1$ using fewer than $L_2$ when it converges. Newton's method only converges for $\gamma\geq 0.6$. When it converges, it uses very few iterations compared to the other methods, the only scheme that uses fewer iterations is the $L_1-N$-scheme. However, the $L_1-N$-scheme does not converge for all $\gamma$. The $L_2-N$-scheme is able to compete with Newton's method and, in addition, still performs like a higher order method when $\gamma=0.5$. Here, it uses less than half the number of iterations compared with the second fastest algorithm being the $L_1-A$-scheme. Both the adaptive L-schemes converge for all $\gamma$. It is worth noting that it appears that starting with a small stabilization parameter works better in terms of the number of iterations for the adaptive schemes, as the decrease in $L$ is not aggressive enough. The $L_2-A$ scheme uses fewer iterations for smaller $\gamma$, meaning that, depending on the problem, smaller stabilization parameters that still give convergence are likely to be faster.
	\renewcommand{\arraystretch}{1.3} 
	\begin{table}[H]
    \centering
	\begin{tabular}{c|ccccc}
		\hline
		\multicolumn{1}{c|}{$\gamma$} & 
		\multicolumn{1}{c}{0.9} & 
		\multicolumn{1}{c}{0.8} & 
		\multicolumn{1}{c}{0.7} & 
		\multicolumn{1}{c}{0.6} & 
		\multicolumn{1}{c}{0.5} \\
		\hline
		Scheme  &
		Avg. Itr. & 
		Avg. Itr. &  
		Avg. Itr. &
		Avg. Itr. & 
		Avg. Itr.  \\
		 \hline
         
		$L_1$ &  3.7& 5.1  & 9.3  & - & -  \\
		 
		$L_2$ &  20.7  & 20.2 & 18.6 & 15.6 &  11.8  \\
		 
		Newton & 3.3  & 3.3 & 3.3  & 3.5  & -  \\
		
		$L_1-A$ &  3.7  & 5.1  & 6.8  & 5 &  7.6 \\
		
		$L_2-A$ &  13.3  & 12.8  & 13.4  & 13.3  & 11.8 \\
		
		$L_1-N$ &  3.0(1)  & 3.1(1)  & 3.2(1)  & - & -\\
		
		$L_2-N$  & 3.3(1)  & 3.4(1) &  3.4(1)  & 3.5(1)  &  3.6(1) \\
		\hline
	\end{tabular}
	\captionsetup{width=\textwidth}
	\caption{Test case: two-phase flow - Average and total number of iterations at $T=1$ with $\tau=0.1$ for different iterative methods under varying H\"older exponent $\gamma$. By $-$ we denote divergence. Number in parentheses $(\cdot)$ represents the number of L-scheme iterations before the adaptive algorithm switches to Newton's method.}
	\label{tab: twophase}
	\end{table}

	\subsubsection{Switching characteristics}

    Finally, we look at the adaptive behavior of switching between the L-scheme and Newton's method, and at the adaptive update of the stabilization parameter $L$. In \Cref{fig: twophase L adaptive}, the ratio between the estimator and the incremental error for both $L-A$ schemes is displayed when $\gamma=0.7$. The ratio is used to tune $L$. For $L_1-A$, the decrease happens quickly, immediately followed by an increase in $L$ due to the decrease being too large, indicating that it is close to the lower limit. When $L$ starts off being large for the $L_2-A$ scheme, it takes 10 iterations before the first decrease, and then is closely followed by a second decrease. Still, for both, we see that the ratio eventually borders right under 0.8, and nothing else happens until convergence is reached. This behavior is also the reason for $L_1-A$ outperforming $L_2-A$ for all $\gamma$, but in \Cref{tab: twophase} for $\gamma=0.5$ the difference becomes smaller. 
		\begin{figure}[H]
		\centering
		\includegraphics[width=0.5\textwidth]{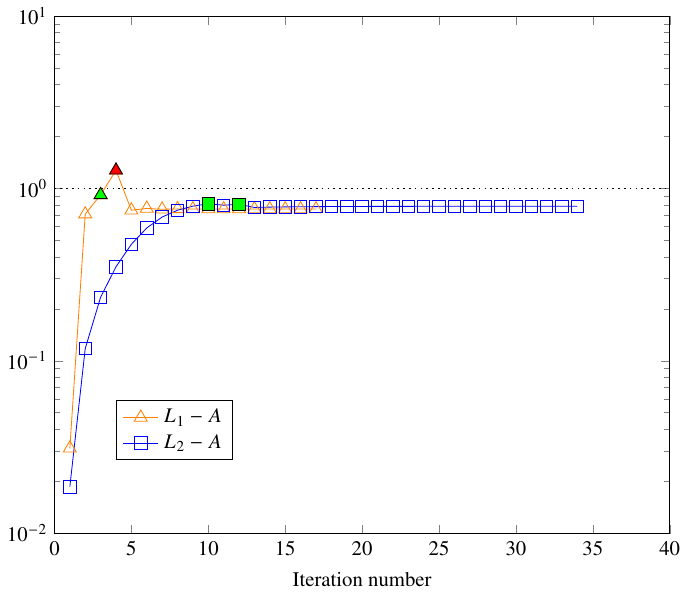}
		\caption{Test case: two-phase flow - The ratio between $\eta_{\!_{1\to 1}}/\eta_{\!_{inc,1}}$ at each iteration, i.e. the criteria for decreasing or increasing $L$ in the adaptive algorithm for the first time step when $\gamma=0.7$.  Green indicates a decrease and red an increase of the stabilization parameter. The dashed line is $C_{\rm tol}=1$.}
        \label{fig: twophase L adaptive}
	\end{figure}
    The switching indicators for the $L-N$ scheme are plotted in \Cref{fig: twophase evolution} for $\gamma=0.5$, along with the efficiency indices for $\eta_{\!_{2\to 2}}$. The switch happens after the first iteration, and Newton's method then converges. Note that the efficiency index is above 1,  despite not computing the constants $C_1^{k}$ and $C_{2}^{k}$, which are needed to have a guaranteed upper bound. Nevertheless, the estimators appear to accurately indicate when a switch from the L-scheme to Newton's method can take place.  
    \begin{figure}[H]
		\centering
		\includegraphics[width=0.5\textwidth]{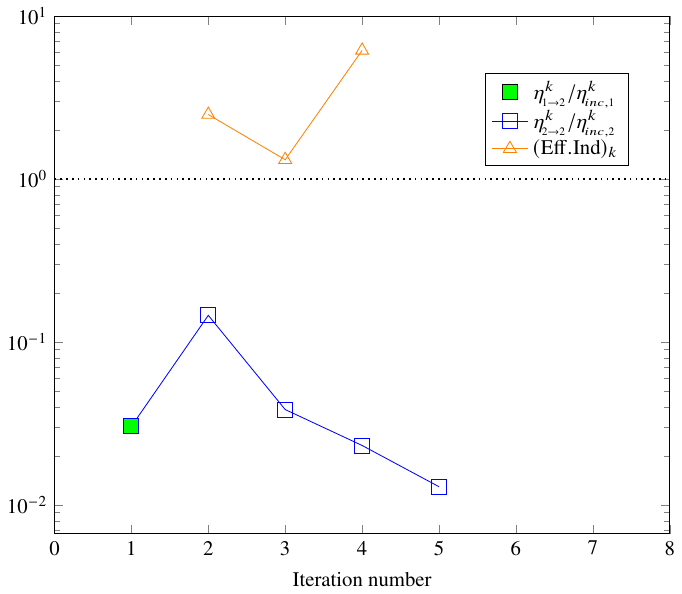}
		\caption{Test case: two-phase flow - Evolution of switching indicators for the $L_2-N$ scheme for $h=\sqrt{2}/40$ and $\tau=0.1$ when $\gamma=0.5$. The effectivity indices \eqref{eq: eff ind} corresponding to the Newton iterations ($\eta_{\!_{2\to 2}}$) are also plotted. The dashed line is $C_{\rm tol}=1$.}
        \label{fig: twophase evolution}
	\end{figure}
	\section{Surfactant transport in porous media}\label{sec: surfactant transport}

	In this section, we consider the transport of a surfactant in a variably saturated medium. For the water flow, we consider Richards' equation, and it will be fully coupled with a reaction-diffusion-convection equation for the transport of the surfactant. Precisely we consider the same model as \cite{illiano_iterative_2021}: Find the pressure head ($\psi$) and the surfactant concentration ($c$) such that 
	\begin{subequations}
		\begin{align}
			\p_{t}\theta(\psi,c)-\div (K(\theta(\psi,c))\del(\psi+z))&=f_3,\quad&&\mbox{in }\Omega\times[0,T],\\
			\p_{t}(\theta(\psi,c)c)-\div (D\del c-\bm{u}_{w}c)&=f_4,\quad&&\mbox{in }\Omega\times[0,T],
		\end{align}
	\end{subequations}
	where $\theta$ is the water content, $K$ is the hydraulic conductivity, $D>0$ the diffusion coefficient and $z$ the height against gravitational direction. Lastly $\bm{u}_w:=-K(\theta(\psi,c))\del(\psi+z)$ is the water flux and $f_3,f_4$ are sink/source terms.

	For the discretization, we will use the backward Euler method in time, and $\P 1$ elements in space. The fully discrete formulation at time $t_n$ is then: Given $\psi_{h}^{n-1},c_{h}^{n-1}\in Q_h$ find $\psi_{h}^{n},c^{n}\in Q_h$ such that
	\begin{subequations}\label{eq: surfactant transport semi-discrete}
		\begin{align}
			\langle \theta(\psi_{h}^{n},c^{n})-\theta(\psi_{h}^{n-1},c_{h}^{n-1}),q_h\rangle +\tau \langle K(\theta(\psi_{h}^{n},c^{n}))\del(\psi_{h}^{n}+z),\del q_h\rangle = \tau\langle f_3^{n},q_h\rangle,\\
			\langle \theta(\psi_{h}^{n},c^{n})c^{n}-\theta(\psi_{h}^{n-1},c_{h}^{n-1})c_{h}^{n-1},r_h\rangle +\tau \langle D\del c^{n}+\bm{u}_w^{n-1}c^{n},\del r_h\rangle = \tau\langle f_4^{n},r_h\rangle,
		\end{align}
	\end{subequations}
    for all $q_h, r_h\in Q_h$.
	Similar to \cite{illiano_iterative_2021} we have chosen to evaluate the water flux $\bmu_w$ at the previous time step as this simplifies the presentation. To solve the sequence of nonlinear problems, we will consider two linearization schemes, the L-scheme, due to its robustness with respect to the mesh size, and Newton's method, due to the higher-order convergence. We mention that there exist many linearization schemes for solving the problem above, including the modified Picard method \cite{celia_general_1990}, the modified L-scheme \cite{mitra_modified_2019}, and using the Picard method to generate a good initial guess for Newton's method \cite{bergamaschi_mixed_1999}. 

    Newton's method as a linearization scheme is known to put severe restrictions on the time step size for fine meshes \cite{radu2006}. Therefore, our goal in this section is to adaptively switch between the L-scheme and Newton's method and also develop an adaptive time-stepping algorithm based on \textit{a posteriori} error estimates.
	We introduce the total residual of the weak formulation \eqref{eq: surfactant transport semi-discrete} as
	\begin{definition}[Residual surfactant transport]
		For all $q_h,r_h\in Q_h$, the total residual of \eqref{eq: surfactant transport semi-discrete} is defined by
\begin{align}
		\begin{split}
			\langle \mathcal{R}(\varphi_1,\varphi_2),(q_h,r_h)\rangle:=
			\,\langle \theta(\varphi_1,\varphi_2)-\theta(\psi_{h}^{n-1},c_{h}^{n-1}),q_h\rangle +\tau \langle K(\theta(\varphi_1,\varphi_2))\del(\varphi_1+z),\del q_h\rangle \\
			\quad\quad\,\,\,+\langle \theta(\varphi_1,\varphi_2)\varphi_2-\theta(\psi_{h}^{n-1},c_{h}^{n-1})c_{h}^{n-1},r_h\rangle +\tau \langle D\del \varphi_2+\bm{u}_w^{n-1}\varphi_2,\del r_h\rangle-\tau\langle f_3^{n},q_h\rangle-\tau\langle f_4^{n},r_h\rangle.
		\end{split}
	\end{align}
    \label{def: surfactant residual}
	\end{definition}
	
	\subsection{Linearization methods}
	
	To solve the sequence of non-linear problems \eqref{eq: surfactant transport semi-discrete}, we have chosen to only consider the L-scheme \cite[Eqs. (21-22)]{illiano_iterative_2021} and Newton's method \cite[Eqs. (16-17)]{illiano_iterative_2021}. 
First we consider the more robust scheme which is the L-scheme: Given $\psi_{h}^{n-1},c_{h}^{n-1},\psi_{h}^{k},c_{h}^{k}\in Q_h$ find $\psi_{h}^{k+1},c_{h}^{k+1}\in Q_h$ such that
\begin{subequations}\label{eq: surfactant transport original Lscheme}
	\begin{align}
    \begin{split}
			\langle L_1(\psi_{h}^{k+1}-\psi_{h}^{k}),q_h\rangle+\langle \theta(\psi_{h}^{k},c_{h}^{k})-\theta(\psi_{h}^{n-1},c_{h}^{n-1}),q_h\rangle& \\+\tau \langle K(\theta(\psi_{h}^{k},c_{h}^{k}))\del(\psi_{h}^{k+1}+z),\del q_h\rangle &= \tau\langle f_3^{n},q_h\rangle,
            \end{split}\\
            \begin{split}
		\langle L_2(c_{h}^{k+1}-c_{h}^{k}),q_h\rangle+\langle \theta(\psi_{h}^{k},c_{h}^{k})c_{h}^{k+1}-\theta(\psi_{h}^{n-1},c_{h}^{n-1})c_{h}^{n-1},r_h\rangle&\\ +\tau \langle D\del c_{h}^{k+1}+\bm{u}_w^{n-1}c_{h}^{k+1},\del r_h\rangle &= \tau\langle f_4^{n},r_h\rangle
        \end{split}
	\end{align}
\end{subequations}
for all $ q_h, r_h\,\in Q_h$. Similarly to before, we introduce the bilinear form for the L-scheme
\begin{equation}\label{eq: surfactant transport bilinear Lscheme}
	\begin{aligned}
		\mathcal{B}_{L,(\psi_{h}^{k},c_{h}^{k})}((\varphi_1,\varphi_2),(q_h,r_h)):=&\, \langle L_1\varphi_1,q_h\rangle+\tau \langle K(\theta(\psi_{h}^{k},c_{h}^{k}))\del(\varphi_1),\del q_h\rangle, \\
		&+\langle L_2\varphi_2,r_h\rangle+\langle \theta(\psi_{h}^{k},c_{h}^{k})\varphi_2,r_h\rangle+\tau \langle D\del \varphi_2+\bm{u}_w^{n-1}\varphi_2,\del r_h\rangle,
	\end{aligned}
\end{equation}
for all $q_h,r_h\in Q_h$.
Then the L-scheme can be expressed as
\begin{iterative}[L-scheme] For the bilinear form defined in \eqref{eq: surfactant transport bilinear Lscheme} and the residual in \Cref{def: surfactant residual} the L-scheme can be defined as
	\begin{align}\label{eq: surfactant transport Lscheme}
		\mathcal{B}_{L,(\psi_{h}^{k},c_{h}^{k})}((\delta\psi_{h}^{k+1},\delta c_{h}^{k+1}),(q_h,r_h))=-\langle \mathcal{R}(\psi_{h}^{k},c_{h}^{k}),(q_h,r_h)\rangle.
	\end{align}
\end{iterative}

\begin{definition}[Iteration dependent norm for the L-scheme] For $\varphi_1,\varphi_2\in H_{0}^{1}(\Omega)$, the iteration-dependent norm for the L-scheme \eqref{eq: surfactant transport Lscheme} is defined by
	\begin{align}
    \begin{split}
		\norm{\varphi_1,\varphi_2}_{L,(\psi_{h}^{k},c_{h}^{k})}:=&\biggl(\int_{\Omega} L_1(\varphi_1)^{2}+L_2(\varphi_2)^{2}+\tau|(K(\theta(\psi_{h}^{k},c_{h}^{k})))^{\frac{1}{2}}\del(\varphi_1)|^{2}\\&\,\quad+\theta(\psi_{h}^{k},c_{h}^{k})(\varphi_2)^{2}+\tau |D^{\frac{1}{2}}\del\varphi_2|^{2}\biggr)^{\frac{1}{2}}.
        \end{split}
	\end{align}
\end{definition} 
We consider Newton's method as presented in \cite[Eqs.(16-17)]{illiano_iterative_2021}, as they reported practically no difference between it and the traditional Newton method.  The bilinear form for Newton's method is
\begin{align}\label{eq: surfactant transport bilinear Newton}
	\begin{split}
		\mathcal{B}_{N,(\psi_{h}^{k},c_{h}^{k})}((\varphi_1,\varphi_2),(q_h,r_h)):=\, \langle \frac{\partial \theta}{\partial \psi}\left(\psi_{h}^{k},c_{h}^{k}\right)\varphi_1,q_h\rangle+\tau \langle K(\theta(\psi_{h}^{k},c_{h}^{k}))\del(\varphi_1),\del q_h\rangle \\
		+\tau \langle K'(\theta(\psi_{h}^{k},c_{h}^{k}))\frac{\partial \theta}{\partial \psi}\left(\psi_{h}^{k},c_{h}^{k}\right)\del(\psi_{h}^{k}+z)\varphi_1,\del q_h\rangle\\
		+\langle \frac{\partial \theta}{\partial c}\left(\psi_{h}^{k},c_{h}^{k}\right)\varphi_2,r_h\rangle+\langle \theta(\psi_{h}^{k},c_{h}^{k})\varphi_2,r_h\rangle+\tau \langle D\del \varphi_2+\bm{u}_w^{n-1}\varphi_2,\del r_h\rangle.
	\end{split}
\end{align}
Therefore, we can write Newton's method as
\begin{iterative}[Newton's method] For the bilinear form defined in \eqref{eq: surfactant transport bilinear Newton} and the residual in \Cref{def: surfactant residual}, Newton's method can be defined as
	\begin{align}\label{eq: surfactant transport Newtons method}
		\mathcal{B}_{N,(\psi_{h}^{k},c_{h}^{k})}((\delta\psi_{h}^{k+1},\delta c_{h}^{k+1}),(q_h,r_h))=-\langle \mathcal{R}(\psi_{h}^{k},c_{h}^{k}),(q_h,r_h)\rangle.
	\end{align}
\end{iterative}

\begin{definition}[Iteration dependent norm for Newton's method] For $\varphi_1,\varphi_2\in H_{0}^{1}(\Omega)$, the iteration-dependent norm for Newton's method \eqref{eq: surfactant transport Newtons method} is defined by
	\begin{align}
    \begin{split}
		\norm{\varphi_1,\varphi_2}_{N,(\psi_{h}^{k},c_{h}^{k})}:=&\biggl(\int_{\Omega} \frac{\partial \theta}{\partial \psi}\left(\psi_{h}^{k},c_{h}^{k}\right)\varphi_1^{2}+\frac{\partial \theta}{\partial c}\left(\psi_{h}^{k},c_{h}^{k}\right)\varphi_2^{2}+\tau|(K(\theta(\psi_{h}^{k},c_{h}^{k})))^{\frac{1}{2}}\del(\varphi_1)|^{2}\\&\,\quad+\theta(\psi_{h}^{k},c_{h}^{k})(\varphi_2)^{2}+\tau |D^{\frac{1}{2}}\del\varphi_2|^{2}\biggr)^{\frac{1}{2}}.
        \end{split}
	\end{align}
\end{definition}

	\subsection{Estimators}\label{sec: surfactant estimators}
Here we derive \textit{a posteriori} error estimators to predict the incremental error when going from the L-scheme to Newton and an estimator for predicting the failure of Newton's method. To simplify notation, we denote by $K^{k}:=K(\theta(\psi_{h}^{k},c_{h}^{k}))$ and $\theta^{k}:=\theta(\psi_{h}^{k},c_{h}^{k})$.
In this section, we make a similar assumption to \cite[Assumption 2.]{stokke_adaptive_2023}, that
\begin{assumption}\label{ass: nondom}
	For a $k\in\mathbb{N}$, there exists a constant $C_{3}^{k}\in [0,2)$ such that
	\begin{equation*}
		\tau \left|K(\theta^{k})^{-\frac{1}{2}}K'(\theta^{k})\frac{\partial \theta^{k}}{\partial \psi}\del(\psi_{h}^{k}+z)\right|^{2}\leq \left(C_{3}^{k}\right)^{2}\frac{\partial\theta^{k}}{\partial \psi},
	\end{equation*}
	almost everywhere in $\Omega$.
\end{assumption}
Similarly to the two previous assumptions in \Cref{sec: twophaseflow}, \Cref{ass: nondom} always holds in the degenerate region as $\frac{\p\theta^{k}}{\p\psi}=0$. The inequality also holds if the numerical flux is bounded and the time step size is small. Also note that $C_{3}^{k}$ is computable. 

	\begin{lemma}[L-scheme to Newton estimator]\label{lem: surfactant transport Lscheme to newton}
		Let \Cref{ass: nondom} hold. Let $\{\psi_{h}^{k},c_{h}^{k}\}$ be a sequence of iterates generated using the L-scheme \eqref{eq: surfactant transport Lscheme}. Then, if $\hat{\psi}^{k+1},\hat{c}^{k+1}$ are computed using Newton's method \eqref{eq: surfactant transport Newtons method}, then the incremental error satisfies
		\begin{equation}
			\norm{(\hat{\psi}^{k+1}-\psi_{h}^{k},\hat{c}^{k+1}-c_{h}^{k})}_{N,(\psi_{h}^{k},c_{h}^{k})}\leq \eta_{\!_{3\to 4}}^{k},
		\end{equation}
		where 
		\begin{equation}\label{eq: surf est 3to4} 
			\begin{aligned}
				\eta_{\!_{\hypertarget{est:34}{3\to 4}}}^{k}:= \frac{2}{2-C_{3}^{k}}\left(\left[\eta_{\psi}^{k}\right]^{2}+\left[\eta_c^{k}\right]^{2}+\tau\left[\eta_{D}^{k}\right]^{2}+\tau\left[\eta_{K}^{k}\right]^{2}\right)^{\frac{1}{2}}
			\end{aligned}
		\end{equation}
		with 
		\begin{equation*}
			\begin{aligned}
				\eta_{D}:=&\left\|\left(D\right)^{-\frac{1}{2}}\left(D\del\delta c_{h}^{k}+\bmu_w^{n-1}\delta c_{h}^{k}\right)\right\|,\\
				\eta_K:=&\left\|\left(K^{k}\right)^{-\frac{1}{2}}\left(\delta K^{k}\right)\del(\psi_{h}^{k}+z)\right\|,\\
				\eta_\psi:=&\left\|\left(\frac{\partial\theta^{k}}{\partial\psi}\right)^{-\frac{1}{2}}\left(L_1\delta\psi_{h}^{k}-\delta\theta^{k}\right)\right\|,\\
				\eta_c:=&\left\|\left(\frac{\partial\theta^{k}}{\partial c}\right)^{-\frac{1}{2}}\left(L_2\delta c_{h}^{k}-(\delta\theta^{k})c_{h}^{k}\right)\right\|.
			\end{aligned}
		\end{equation*}
	\end{lemma}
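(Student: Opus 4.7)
The plan is to mimic the proof of \Cref{lem: twophase L to N} following the four-step framework of \Cref{sec: framework}, since the bilinear forms for Newton's method and the L-scheme here have the same structural role as in the two-phase case. I would take test functions $q_h = \delta\hat\psi^{k+1} := \hat\psi^{k+1} - \psi_h^k$ and $r_h = \delta\hat c^{k+1}$ in Newton's method \eqref{eq: surfactant transport Newtons method} and rewrite the left-hand side by splitting $\mathcal{B}_{N,(\psi_h^k,c_h^k)}$ into its symmetric part, which by definition produces $\norm{\delta\hat\psi^{k+1},\delta\hat c^{k+1}}_{N,(\psi_h^k,c_h^k)}^2$, and the non-symmetric remainder containing the Newton correction $\tau\langle K'(\theta^k)\tfrac{\partial \theta^k}{\partial \psi}\del(\psi_h^k+z)\,\delta\hat\psi^{k+1},\del\delta\hat\psi^{k+1}\rangle$. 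This yields the Step~1 identity: the squared Newton norm equals $-\langle \mathcal{R}(\psi_h^k,c_h^k),(\delta\hat\psi^{k+1},\delta\hat c^{k+1})\rangle$ minus the non-symmetric term.

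For Step 2, the previous iterate $(\psi_h^k,c_h^k)$ was produced by the L-scheme, so it satisfies $\mathcal{B}_{L,(\psi_h^{k-1},c_h^{k-1})}((\delta\psi_h^k,\delta c_h^k),(q_h,r_h)) = -\langle \mathcal{R}(\psi_h^{k-1},c_h^{k-1}),(q_h,r_h)\rangle$. Adding this identity (with the same test functions) to $-\langle \mathcal{R}(\psi_h^k,c_h^k),\cdot\rangle$ causes the diffusion and convective terms shared by both residuals to collapse into coefficient differences, leaving only the increments $L_1\delta\psi_h^k - \delta\theta^k$, $L_2 \delta c_h^k - (\delta\theta^k)c_h^k$, $(\delta K^k)\del(\psi_h^k+z)$, and $D\del\delta c_h^k + \bm{u}_w^{n-1}\delta c_h^k$ tested against $\delta\hat\psi^{k+1}$ or $\del\delta\hat\psi^{k+1}$, $\delta\hat c^{k+1}$ or $\del\delta\hat c^{k+1}$ respectively.

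In Step 3, on each of the four resulting pairings I would multiply and divide by the appropriate weight appearing in the Newton norm, namely $(\partial\theta^k/\partial\psi)^{\pm 1/2}$, $(\partial\theta^k/\partial c)^{\pm 1/2}$, $(K^k)^{\pm 1/2}$, and $D^{\pm 1/2}$, then apply Cauchy--Schwarz. Each piece produces exactly one of $\eta_\psi$, $\eta_c$, $\eta_K$, $\eta_D$ on the data side and the corresponding contribution to $\norm{\delta\hat\psi^{k+1},\delta\hat c^{k+1}}_{N,(\psi_h^k,c_h^k)}$ on the test side. A second, discrete Cauchy--Schwarz in $\mathbb{R}^4$ factors out the full Newton norm and leaves the square root $(\eta_\psi^2+\eta_c^2+\tau\eta_D^2+\tau\eta_K^2)^{1/2}$, mirroring display \eqref{eq: two phase proof step 3}.

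The main obstacle, and the reason \Cref{ass: nondom} is needed, is Step~4: the non-symmetric Newton term from Step~1. Writing $\bm{a}^k := K'(\theta^k)\tfrac{\partial\theta^k}{\partial\psi}\del(\psi_h^k+z)$, I would estimate
\begin{equation*}
\tau\langle \bm{a}^k\,\delta\hat\psi^{k+1},\del\delta\hat\psi^{k+1}\rangle
\leq \Bigl(\tau\!\int_\Omega|K(\theta^k)^{-1/2}\bm{a}^k|^2(\delta\hat\psi^{k+1})^2\Bigr)^{1/2}\Bigl(\tau\!\int_\Omega|K(\theta^k)^{1/2}\del\delta\hat\psi^{k+1}|^2\Bigr)^{1/2},
\end{equation*}
and invoke \Cref{ass: nondom} to bound the first factor by $C_3^k(\int_\Omega\tfrac14\tfrac{\partial\theta^k}{\partial\psi}(\delta\hat\psi^{k+1})^2)^{1/2}$. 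An application of Young's inequality then absorbs this into $(C_3^k/2)\,\norm{\delta\hat\psi^{k+1},\delta\hat c^{k+1}}_{N,(\psi_h^k,c_h^k)}^2$, exactly as for $\Gamma_3$ in \Cref{lem: twophase L to N}. Moving this term to the left-hand side and dividing by $(2-C_3^k)/2$ gives the prefactor in \eqref{eq: surf est 3to4} and completes the bound. The rest is algebra and presents no difficulty beyond careful bookkeeping of the weights.
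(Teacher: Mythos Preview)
Your proposal is correct and mirrors the paper's proof exactly: Step~1 isolates the non-symmetric $K'$-term (the paper's $\Gamma_1$), Steps~2--3 use the L-scheme identity at iteration $k-1$ together with weighted Cauchy--Schwarz to produce $\eta_\psi,\eta_c,\eta_K,\eta_D$ and factor out the Newton norm, and Step~4 absorbs $\Gamma_1$ via \Cref{ass: nondom}. One cosmetic slip: the factor $\tfrac14$ in your Step~4 bound should not be there (\Cref{ass: nondom}, unlike \Cref{ass: 1} and \Cref{ass: 2}, has no $1/4$ on its right-hand side), but the absorption constant $C_3^k/2$ you arrive at is nonetheless correct and yields the stated prefactor $2/(2-C_3^k)$.
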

	\begin{proof}\noindent Step 1. We see that the norm can be rewritten in terms of the residual and non-symmetric part of the bilinear form, therefore we have
		\begin{equation*}
		\begin{aligned}
				\norm{\delta\psi_{h}^{k+1},\delta c_{h}^{k+1}}_{N,(\psi_{h}^{k},c_{h}^{k})}^{2}:=&\,\int_{\Omega} \frac{\partial \theta^{k}}{\partial \psi}(\delta\psi_{h}^{k+1})^{2}+\frac{\partial \theta^{k}}{\partial c}(\delta c_{h}^{k+1})^{2}+\tau|(K^{k})^{\frac{1}{2}}\del(\delta\psi_{h}^{k+1})|^{2}\\&+\theta^{k}(\delta c_{h}^{k+1})^{2}+\tau |D^{\frac{1}{2}}\del\delta c_{h}^{k+1}|^{2}\\
				\overset{\eqref{eq: surfactant transport Newtons method}}{=}&\, -\underbrace{\tau \langle K'(\theta^{k})\frac{\partial \theta^{k}}{\partial \psi}\del(\psi_{h}^{k}+z)\delta\psi_{h}^{k+1},\del \delta\psi_{h}^{k+1}\rangle}_{:=\Gamma_1}\\
				&+\underbrace{\tau\langle f_3,\delta\psi_{h}^{k+1}\rangle -\langle\theta^{k}-\theta^{n-1},\delta\psi_{h}^{k+1}\rangle-\tau\langle K^{k}\del(\psi_{h}^{k}+z),\del\delta\psi_{h}^{k+1}\rangle}_{:=\Gamma_2}\\
				&+\underbrace{\tau\langle f_4,\delta c_{h}^{k+1}\rangle-\langle \theta^{k}c_{h}^{k}-\theta^{n-1}c_{h}^{n-1},\delta c_{h}^{k+1}\rangle -\tau\langle D\del c_{h}^{k}+\bm{u}_w^{n-1}c_{h}^{k},\del \delta c_{h}^{k+1}\rangle}_{:=\Gamma_3}.
		\end{aligned}
		\end{equation*}
        \noindent Step 2. and Step 3. Consider $\Gamma_2$ and $\Gamma_3$. By using the fact that the bilinear form evaluated at iteration $k$ is equal to the residual at iteration $k-1$, where both are computed using the L-scheme, we see that
		\begin{equation}
			\begin{aligned}
				\Gamma_2+\Gamma_3=&\, \langle L_1(\delta\psi_{h}^{k}),\delta\psi_{h}^{k+1}\rangle-\langle\theta^{k}-\theta^{k-1},\delta\psi_{h}^{k+1}\rangle-\tau\langle (K^{k}-K^{k-1})\del(\psi_{h}^{k}+z),\del\delta\psi_{h}^{k+1}\rangle\\
				&+\langle L_2(\delta c_{h}^{k}),\delta c_{h}^{k+1}\rangle-\langle (\theta^{k}-\theta^{k-1})c_{h}^{k},\delta c_{h}^{k+1}\rangle \\&-\tau\langle D\del (c_{h}^{k}-c_{h}^{k-1})+\bm{u}_w^{n-1}(c_{h}^{k}-c_{h}^{k-1}),\del \delta c_{h}^{k+1}\rangle.
			\end{aligned}
		\end{equation}
		See that we can follow the same lines as above to obtain a relation with the iteration-dependent norm.
        
        \noindent Step 4.
		We see that $\Gamma_1$ using \Cref{ass: nondom} can be treated similarly to \cite[Eq. (16.b)]{stokke_adaptive_2023}, thus we get
		\begin{align*}
			\Gamma_1 \leq \frac{C_{3}^{k}}{2}\norm{\delta\psi_{h}^{k+1},\delta c_{h}^{k+1}}_{N,(\psi_{h}^{k},c_{h}^{k})}^{2},
		\end{align*}	
        and can conclude that the estimate holds.
		
	\end{proof}
	
	The control of Newton error follows similar steps.
	\begin{lemma}[Newton to Newton estimator]\label{lem: surfactant transport newton to newton}
		Let $\{\psi_{h}^{k},c_{h}^{k}\}$ be a sequence of iterates generated using Newton's method \eqref{eq: surfactant transport Newtons method}. Then, if $\hat{\psi}^{k+1},\hat{c}^{k+1}$ are computed using Newton's method \eqref{eq: surfactant transport Newtons method}, then the incremental error satisfies
		\begin{equation}
			\norm{(\hat{\psi}^{k+1}-\psi_{h}^{k},\hat{c}^{k+1}-c_{h}^{k})}_{N,(\psi_{h}^{k},c_{h}^{k})}\leq \eta_{\!_{4\to 4}}^{k},
		\end{equation}
		where 
		\begin{equation}\label{eq: surf est 4to4} 
			\eta_{\!_{4\to 4}}^{k}:=\frac{2}{2-C_N^{k}}\left(\left[\eta_{\psi}^{k}\right]^{2}+\left[\eta_c^{k}\right]^{2}+\tau\left[\eta_{D}^{k}\right]^{2}+\tau\left[\eta_{K}^{k}\right]^{2}\right)^{\frac{1}{2}},
		\end{equation}
		with
			\begin{equation*}
			\begin{aligned}
				\eta_{D}:=&\left\|\left(D\right)^{-\frac{1}{2}}\left(D\del\delta c_{h}^{k}+\bmu_w\delta c_{h}^{k}\right)\right\|,\\
				\eta_K:=&\left\|\left(K^{k}\right)^{-\frac{1}{2}}\left(\left(\delta K^{k}\right)\del(\psi_{h}^{k}+z)-(K\circ\theta)'(\psi_{h}^{k-1})\del(\psi_{h}^{k-1}+z)\delta\psi_{h}^{k}\right)\right\|,\\
				\eta_\psi:=&\left\|\left(\frac{\partial\theta^{k}}{\partial\psi}\right)^{-\frac{1}{2}}\left(\frac{\partial\theta^{k-1}}{\partial \psi}\delta\psi_{h}^{k}-\delta\theta^{k}\right)\right\|,\\
				\eta_c:=&\left\|\left(\frac{\partial\theta^{k}}{\partial c}\right)^{-\frac{1}{2}}\left(\frac{\partial\theta^{k-1}}{\partial c}\delta c_{h}^{k}-(\delta\theta^{k})c_{h}^{k}\right)\right\|.
			\end{aligned}
		\end{equation*}
	\end{lemma}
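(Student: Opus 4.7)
The plan is to follow the same four-step strategy used in the proof of \Cref{lem: surfactant transport Lscheme to newton}, the only conceptual change being that the previous iterate $(\psi_h^k,c_h^k)$ now comes from Newton's method rather than the L-scheme. Consequently, the non-symmetric handling in Step 4 carries over verbatim, and the only place where the proof genuinely differs is the algebraic rearrangement in Steps 2--3, where a different bilinear form is used in the substitution.

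\textbf{Step 1.} Testing \eqref{eq: surfactant transport Newtons method} against $(\delta\psi_h^{k+1},\delta c_h^{k+1})$ and using \eqref{eq: surfactant transport bilinear Newton} gives the decomposition
\[
\norm{\delta\psi_h^{k+1},\delta c_h^{k+1}}_{N,(\psi_h^k,c_h^k)}^{2}=\Gamma_1+\Gamma_2+\Gamma_3,
\]
where $\Gamma_1=-\tau\langle K'(\theta^k)\tfrac{\partial\theta^k}{\partial\psi}\del(\psi_h^k+z)\delta\psi_h^{k+1},\del\delta\psi_h^{k+1}\rangle$ is the non-symmetric convective Newton term and $\Gamma_2,\Gamma_3$ collect the contributions of the residual $-\langle\mathcal{R}(\psi_h^k,c_h^k),(\delta\psi_h^{k+1},\delta c_h^{k+1})\rangle$ on the flow and transport equations, exactly as in the previous lemma.

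\textbf{Steps 2--3.} Since $(\psi_h^k,c_h^k)$ was generated by Newton's method, the identity $\mathcal{B}_{N,(\psi_h^{k-1},c_h^{k-1})}((\delta\psi_h^k,\delta c_h^k),(\cdot,\cdot))=-\langle\mathcal{R}(\psi_h^{k-1},c_h^{k-1}),(\cdot,\cdot)\rangle$ holds; adding this identity, tested against $(\delta\psi_h^{k+1},\delta c_h^{k+1})$, to $\Gamma_2+\Gamma_3$ cancels the source terms $f_3^n,f_4^n$ and the history contributions $\theta(\psi_h^{n-1},c_h^{n-1})$, $\theta(\psi_h^{n-1},c_h^{n-1})c_h^{n-1}$, and leaves precisely the four combinations $\tfrac{\partial\theta^{k-1}}{\partial\psi}\delta\psi_h^k-\delta\theta^k$, $\tfrac{\partial\theta^{k-1}}{\partial c}\delta c_h^k-(\delta\theta^k)c_h^k$, $(\delta K^k)\del(\psi_h^k+z)-(K\circ\theta)'(\psi_h^{k-1})\del(\psi_h^{k-1}+z)\delta\psi_h^k$ and $D\del\delta c_h^k+\bm{u}_w^{n-1}\delta c_h^k$. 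Multiplying and dividing each of these by $(\partial_\psi\theta^k)^{1/2}$, $(\partial_c\theta^k)^{1/2}$, $(K^k)^{1/2}$, $D^{1/2}$ respectively, applying Cauchy--Schwarz componentwise, and then a single Cauchy--Schwarz on the resulting sum yields
\[
\Gamma_2+\Gamma_3\le\bigl([\eta_\psi^k]^2+[\eta_c^k]^2+\tau[\eta_K^k]^2+\tau[\eta_D^k]^2\bigr)^{1/2}\norm{\delta\psi_h^{k+1},\delta c_h^{k+1}}_{N,(\psi_h^k,c_h^k)},
\]
with the indicators defined in the statement.

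\textbf{Step 4.} The term $\Gamma_1$ is handled exactly as in \Cref{lem: surfactant transport Lscheme to newton}: \Cref{ass: nondom} (applied at the current iterate with constant denoted $C_N^k$) together with Cauchy--Schwarz and Young's inequality gives $\Gamma_1\le\tfrac{C_N^k}{2}\norm{\delta\psi_h^{k+1},\delta c_h^{k+1}}_{N,(\psi_h^k,c_h^k)}^{2}$. Combining with the bound on $\Gamma_2+\Gamma_3$ and dividing through by $\norm{\delta\psi_h^{k+1},\delta c_h^{k+1}}_{N,(\psi_h^k,c_h^k)}$ produces \eqref{eq: surf est 4to4}. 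The main obstacle is the algebraic bookkeeping in Steps 2--3 on the transport equation: the Newton linearization of the product $\theta(\psi,c)c$ splits into a $\tfrac{\partial\theta}{\partial c}c$-part and a $\theta$-part, and one must carefully identify which pieces cancel against $\Gamma_3$ and which recombine into $\eta_c^k$ and $\eta_\psi^k$. Once this is arranged the remainder is a direct translation of the proof of \Cref{lem: surfactant transport Lscheme to newton}.
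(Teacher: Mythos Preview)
Your proposal is correct and follows essentially the same approach as the paper, which simply states that the proof follows the steps of \Cref{lem: surfactant transport Lscheme to newton}. The only substantive change---replacing the L-scheme identity by the Newton identity $\mathcal{B}_{N,(\psi_h^{k-1},c_h^{k-1})}((\delta\psi_h^k,\delta c_h^k),\cdot)=-\langle\mathcal{R}(\psi_h^{k-1},c_h^{k-1}),\cdot\rangle$ in Step~2, which produces the extra Jacobian pieces $\tfrac{\partial\theta^{k-1}}{\partial\psi}\delta\psi_h^k$, $\tfrac{\partial\theta^{k-1}}{\partial c}\delta c_h^k$, and $(K\circ\theta)'(\psi_h^{k-1})\del(\psi_h^{k-1}+z)\delta\psi_h^k$ appearing in $\eta_\psi,\eta_c,\eta_K$---is exactly what you identify. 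One small remark: your closing sentence suggests both $\eta_\psi^k$ and $\eta_c^k$ emerge from the transport-equation bookkeeping, but in fact $\eta_\psi^k$ arises from the flow equation ($\Gamma_2$) and $\eta_c^k$ from the transport equation ($\Gamma_3$); the two equations decouple in the simplified Newton linearization \eqref{eq: surfactant transport bilinear Newton}, so the algebra is cleaner than you fear.
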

	\begin{proof}
		The proof follows similar steps as in the previous proof.
	\end{proof}
	\begin{remark}[Similarity with Richards' equation]
		We note that the estimators in \Cref{lem: surfactant transport Lscheme to newton} and \Cref{lem: surfactant transport newton to newton} correspond to the estimators in \cite{stokke_adaptive_2023} if $c=0$ and we disregard the equilibriated fluxes.
	\end{remark}
	
		\begin{remark}
		The alternating linearization procedure in \cite{illiano_iterative_2021}, could in principle use the estimators in \cite{stokke_adaptive_2023} with modifications to reflect the coupling of the transport. The goal would then be to switch one step in the linearization procedure at a time, e.g. switching Richards' to Newton while keeping the L-scheme for the transport if the guess is not sufficiently good enough for the concentration. This has not been pursued here, as we seek a global higher-order method. Numerical experiments in \cite{illiano_iterative_2021,zeng_multidimensional_2021} also indicate that the fully coupled solution strategy converges faster. But in principle, the decoupling strategy would consider one equation at a time. Here we could have, similarly to the two-phase flow and the Biot case, pursued criteria for adaptively choosing $L_1$ and $L_2$.
	\end{remark}
	
	\subsection{Adaptive algorithms}
	By considering the estimates in \Cref{sec: surfactant estimators} with the estimators  $\eta_{\!_{3\to 4}}$ and $\eta_{\!_{4\to 4}}$  defined in \eqref{eq: surf est 3to4}, and \eqref{eq: surf est 4to4} and the computational considerations in \Cref{sec: comput considerations}, we propose two adaptive iterative algorithms, a switching algorithm between the L-scheme and Newton's method, and an adaptive time-stepping algorithm for Newton's method:
	\begin{algorithm}[H]
		\caption{L-scheme/Newton \it{a-posteriori} switching}\label{alg: a-posteriori}
		\begin{algorithmic}
			\Require $\boldsymbol{\psi}^{n,0}, c^{n,0}$ as initial guess. 
			\Ensure Scheme=\fbox{L-scheme}, $C_{\rm tol}=1.5$
			\For{k=1,2,..}
			\If{Scheme=\fbox{L-scheme}} 
			\State
			Compute iterate using  \fbox{L-scheme}, i.e. \eqref{eq: surfactant transport Lscheme}        
			\If{$\eta_{\!_{3\to 4}}^k\leq C_{\rm tol} \eta^k_{\!_{inc,3}}$}
			\State Set Scheme=\fbox{Newton}
			\EndIf
			\Else 
			\State \  Compute iterate using \fbox{Newton}, i.e. \eqref{eq: surfactant transport Newtons method}
			\If{$\eta_{\!_{4\to 4}}^k> \eta^k_{\!_{inc,4}}$}
			\State Set Scheme=\fbox{L-scheme}
			\EndIf
			\EndIf
			\EndFor

		\end{algorithmic}
	\end{algorithm}
		\begin{algorithm}[H]
		\caption{Newton \it{a-posteriori} control of time step size}\label{alg: a-posteriori 4}
		\begin{algorithmic}
			\Require $\boldsymbol{\psi}^{n,0}, c^{n,0}$ as initial guess. 
			\Ensure Scheme=\fbox{Newton}
			\If{Convergence at previous time step reached after n iterations}
			\State Set time step size $\t=2\t$
			\EndIf
			\For{k=1,2,..}
			\State
			Compute iterate using  \fbox{Newton}, i.e. \eqref{eq: surfactant transport Newtons method}        
			\If{$\eta_{\!_{4\to 4}}^k\leq  1$}
			\State Continue with \fbox{Newton}
			\EndIf
			\If{$\eta_{\!_{4\to 4}}^k> 1$}
			\State Set time step size $\t=\frac{\t}{2}$
			\EndIf
			\EndFor

		\end{algorithmic}
	\end{algorithm}
	\subsection{Numerical examples}
     We consider an example where it is expected that Newton's method will struggle, but also where $L_2$ has to be large to satisfy the assumptions to guarantee the convergence of the L-scheme. The example parameters can be found in \Cref{tab: example surfactant transport}. We consider a strictly unsaturated porous medium.
    The domain is given by $\Omega= \Omega_1\cup\Omega_2$ where $\Omega_1=[0,1]\times [1/4,1]$ and $\Omega_2=[0,1]\times [0,1/4)$. We consider an initial pressure head profile of 
    \begin{equation}
       \psi^{0}= \begin{cases}
            -2,\quad &\mbox{in }\Omega_1,\\
            -y-1/4,\quad &\mbox{in } \Omega_2,
        \end{cases}
    \end{equation}
    and an initial concentration of $c^{0}=1$. The following source terms are applied
    \begin{equation*}
        f_3=\begin{cases}
            0.06\cos((4/3)\pi y)\sin(x) &\mbox{in }\Omega_1,\\
            0\quad &\mbox{in } \Omega_2,
        \end{cases}
    \end{equation*}
    and $f_4=0$ everywhere. At the top boundary, we use a Dirichlet condition for the pressure head $\psi=-2$. For the concentration we impose $c=1$ at $[0,1]\backslash[0.25,0.75]\times 1$ and $c=4$ at $[0.25,0.75]\times 1$. On the rest of the boundary, no-flow conditions are used for both variables. 
	For the parametrization of $\theta$ and $K$ we use the modfied van Genuchten-Mualem model proposed in \cite{knabner_influence_2003}
		\begin{subequations}
		\begin{align}
			\begin{split}
				\theta(\psi,c)=&\,\begin{cases}
					\theta_r+(\theta_s-\theta_r)\left(\frac{1}{1+\left(-\alpha\gamma(c)\psi\right)^{n}}\right)^{\frac{n-1}{n}},\quad &\psi\leq 0,\\
					\theta_s,\quad &\psi >0,
				\end{cases}
			\end{split}\\
			\begin{split}
				K(\theta(\psi,c))=&\,\begin{cases}
					K_s\Theta_{{\rm e}}(\psi,c)^{\frac{1}{2}}\left(1-\left(1-\Theta_{{\rm e}}(\psi,c)^{\frac{n}{n-1}})\right)^{\frac{n-1}{n}}\right)^{2},\quad &\psi\leq 0,\\
					K_s,\quad &\psi >0,
				\end{cases}
			\end{split}
		\end{align}
	\end{subequations}
	where $\gamma(c)$ is the surface tension and $\Theta_{{\rm e}}(\psi,c)$ is the effective water content defined as
	\begin{align*}
		\gamma(c)&:= \frac{1}{1-b\log(c/a+1)},\\
		\Theta_{{\rm e}}(\psi,c) &:= \frac{\theta(\psi,c)-\theta_r}{\theta_s-\theta_r}.
	\end{align*}
	Also, $\theta_r$ and $\theta_s$ are the residual and saturated water content, respectively. The iterative process is stopped when
    \begin{equation}
        \|\psi_h^{k}-\psi_h^{k-1}\|\leq 10^{-6},\mbox{ and }\|c_h^{k}-c_h^{k-1}\|\leq 10^{-6},
    \end{equation}
	is reached.
	
	\begin{table}[h!]
		\centering
		\begin{tabular}{l l l }
			\toprule

			Parameter & & Value  \\
			\midrule
			$D$ & Diffusion constant & $1e-3$ \\
			$a$& Compound specific constant& 0.44\\
			$b$ &Compound specific constant &  0.0046 \\
			$\theta_r$& Residual water content& 0.026 \\
			$\theta_s$ &Saturated water content& 0.42\\
			$K_s$& Conductivity of fully saturated medium& 0.12\\
			$n$ &Soil specific constant &  $2.9$ \\
			$\alpha$& Soil specific constant& 0.551\\
			$L_1$ &Stabilization of $(\psi)$& 0.1\\
			$L_2$ &Stabilization of $(c)$ & 128\\
			\bottomrule
		\end{tabular}
		\caption{Test case: surfactant transport - Parameters}
		\label{tab: example surfactant transport}
	\end{table}
	
	\subsubsection{Comparison of convergence properties}
    The total number of iterations for the proposed algorithms $L/N$ and $N/\t$ for different mesh sizes is presented in \Cref{fig: surfactantconvergence}. For all mesh sizes apart from the coarsest, the $L/N$ algorithm uses the fewest number of iterations. The adaptive time-stepping algorithm $N/\t(10)$ uses the second fewest iterations, indicating that being more aggressive with when to increase the time step size is beneficial to reducing the total number of iterations. The number of failed iterations for both $N/\t(10)$ and $N/\t(5)$ reflects the convergence properties of Newton's method, that a smaller time step size is needed for finer meshes. In \Cref{fig: surfactant transport adaptive time step}, the evolution of the number of iterations and time step size for $h=2/\sqrt{60}$ is displayed. Newton's method fails initially 8 times before it converges. The difference in the number of time steps between the two adaptive time-stepping strategies is also significant, as the increase in time step size happens slower for $N/\t(5)$.
	\begin{figure}[ht!]
		\centering
		\includegraphics[width=0.55\textwidth]{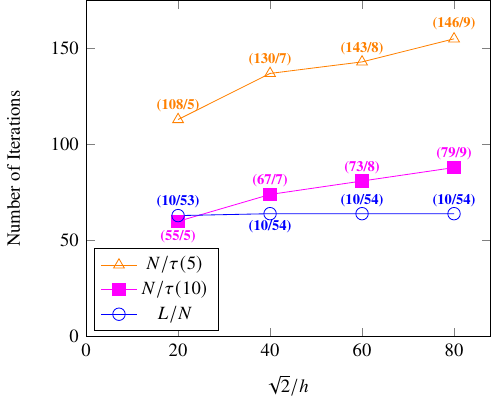}
		\caption{Test case: surfactant transport - Total number of iterations at $T=1$ for $\tau=0.1$ and varying mesh size. For $N/\t$ this means that the initial time step size is $\tau=0.1$. The number in blue parentheses corresponds to (number of L-scheme iterations/number of Newton iterations). The pink and orange parentheses correspond to (successful Newton iterations/unsuccessful Newton iterations). The L-scheme converges for all mesh sizes but uses more than 1000 iterations, and Newton's method converges when $\t=0.0025$ on the coarsest mesh, with a total number of iterations larger than 1000 iterations; therefore, they are omitted from the plot. Newton diverges on finer meshes unless an even smaller time step is chosen. }
        \label{fig: surfactantconvergence}
	\end{figure}
    \begin{figure}[ht!]
		\centering
		\includegraphics[width=0.55\textwidth]{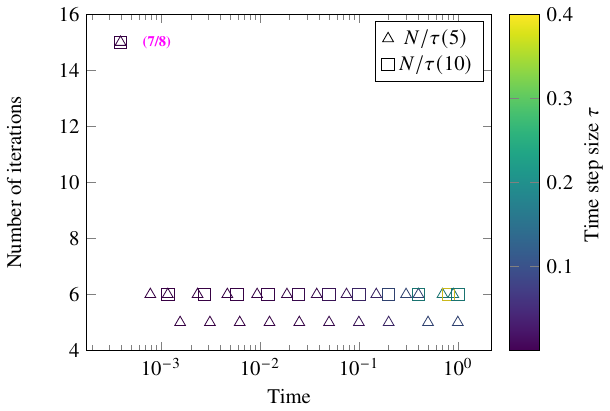}
		\caption{Test case: surfactant transport -  Number of iterations at each time step for $h=2/\sqrt{60}$ with initial time step size $\t=0.1$ for the adaptive time-stepping algorithm. The parentheses correspond to (successful Newton iterations/unsuccessful Newton iterations).}
        \label{fig: surfactant transport adaptive time step}
	\end{figure}
	\subsubsection{Switching characteristics}
 Based on \Cref{fig: surfactantconvergence}, it is evident that the $L/N$-scheme only required 1 L-scheme iteration before switching to Newton's method. Therefore, we choose to take a closer look at the adaptive time-stepping algorithm and the estimator $\eta_{\!_{4\to4}}^{k}$ which predicts the success and failure of Newton's method. The evolution of the estimator and efficiency index for the first two time steps for $N/\t(10)$ are displayed in \Cref{fig: surfactantSwicthing}. First note that for the first iteration at the second time step $t_2$, the ratio between the estimator and the current incremental error is larger than one. In the two-phase flow algorithm, this would have implied divergence. Here, since we made the choice to only increase when $\eta_{\!_{4\to 4}}>1$, it continues with the same time step size. Since the estimate is only an upper bound on the incremental error at the next iteration, and does not imply that it will not be larger than the current incremental error we should be careful when using a direct comparison between $\eta_{\!_{4\to 4}}$ and $\eta_{\!_{inc,4}}$. The $C_{\rm tol}$, which originally was motivated to expedite switching between different schemes, can also be used in this context to avoid a strict comparison. Further, the efficiency indices do not stay larger than 1 for all iterations. This is due to the choice of not computing the constant $C_N^{k}$, which means that we do not have a guaranteed upper bound. Despite this, the estimator for Newton works well in practice to guide the adaptive time-stepping. 
    
	  \begin{figure}[H]
		\centering
		\includegraphics[width=0.5\textwidth]{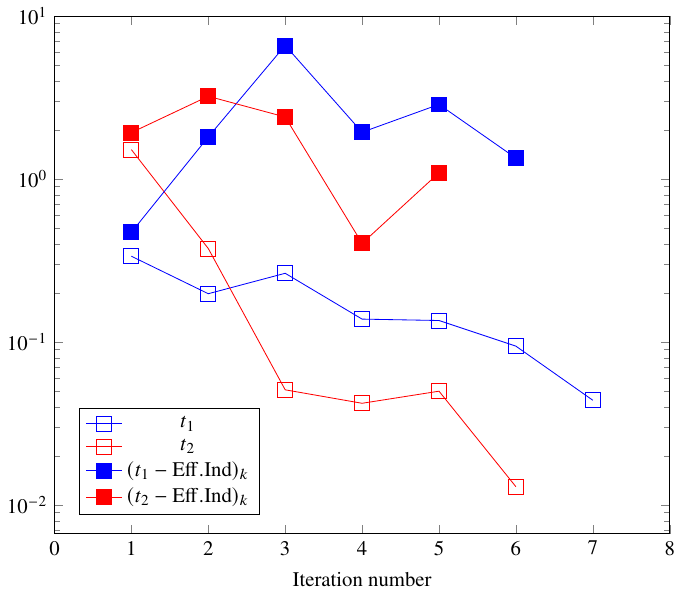}
		\caption{Test case: surfactant transport - Evolution of switching indicators for $N/\t(10)$, i.e. $\eta_{\!_{4\to 4}}^{k}/\eta_{\!_{inc,4}}^{k}$, for the first two time steps are plotted. The corresponding efficiency indices \eqref{eq: eff ind} of the estimator are also plotted.}
        \label{fig: surfactantSwicthing}
	\end{figure}
	\section{Quasi-static Biot model}\label{sec: biot}
	
	In this section, we consider the quasi-static Biot system, which describes a flow in a fully saturated deformable porous medium. The system can be stated as; Find the fluid pressure ($p$) and the displacement ($\bmu$) such that
	\begin{subequations}
		\begin{align}
			c_0\p_t p+\alpha \div\p_t\bmu -\div (\k (\del p-\rho\bm{g})) =&\, f_5,\quad&&\mbox{in }\Omega\times[0,T],\\
			-\div\left(2\mu \e(\bmu)+\lambda \div\bmu\bm{I}\right)+\alpha\del p =&\, \bm{f}_6,\quad&&\mbox{in }\Omega\times[0,T],
		\end{align}
	\end{subequations}
	where $c_0$ is the specific storativity constant, $\alpha$ is the Biot coefficient, $\k$ is the permeability, $\rho$ is the fluid density, $\bm{g}$ is the gravitational vector $\mu$ and $\lambda$ are the Lame parameters. Further, $\e(\bmu):=\left(\del\bmu+\del\bmu^{T}\right)/2$ is the linearized strain, $f_5$ and $\bmF_6$ are source terms representing fluid injection or extraction and body forces, respectively.

For the discretization in time, we will use the backward Euler method, and in space the inf-sup stable Taylor-Hood elements. The fully discrete problem at time $t_n$ is then: Given $(p^{n-1}_{h},\bmu_{h}^{n-1})\in (Q_h,\bm{V}_h)$, find $(p^{n}_{h},\bmu_{h}^{n})\in (Q_h,\bm{V}_h)$ such that
	\begin{subequations}\label{eq: biot semi-discrete}
		\begin{align}
			\langle c_0(p^{n}_{h}-p^{n-1}_{h}),q_h\rangle+\langle \alpha \div (\bm{u}_{h}^{n}-\bm{u}^{n-1}_{h}),q_h\rangle +\tau \langle \k(\del p^{n}_{h}-\rho\bm{g}),\del q_h\rangle &= \tau\langle f_5^{n},q_h\rangle, \quad &&\forall q_h\in Q_h,\label{eq: biot discrete a}\\
			\langle 2\mu\e(\bmu^{n}_h),\e(\bmv_h)\rangle + \langle \lambda\div \bmu_{h}^{n},\div \bmv_h\rangle+\langle \alpha \del p_h^{n},\bmv_h\rangle &= \langle \bmF_6^{n},\bmv_h\rangle,\quad &&\forall \bmv_h\in\bm{V}_h. \label{eq: biot discrete b}
		\end{align}
	\end{subequations}
    To solve the system, we will use the fixed-stress splitting scheme. It is based on freezing the mean stress, or it can be viewed as adding a stabilization term to the flow equation, which allows for sequentially solving the flow and mechanics subproblems. The convergence of the fixed-stress has been studied intensively \cite{both_robust_2017,mikelic_convergence_2013,kim2011,castelletto_accuracy_2015,bause_spacetime_2017,both_numerical_2019}. A central focus has been on determining the stabilization parameter that uses the fewest number of iterations. A brute-force approach based on optimization of the parameter on coarse meshes, along with theoretically justified bounds, was proposed in \cite{storvik_optimization_2019}.

    Our goal in this section is to adaptively choose the parameter $L$, which arises in the stabilization term of the fixed-stress, based on \textit{a posteriori} error estimates. We know from \cite{storvik_optimization_2019} that the optimal $L$ should be within the bound
    \begin{align}\label{eq: biot bound on L}
        L\in\left[\frac{\alpha^{2}}{4\mu+2\lambda},\frac{\alpha^{2}}{K_{dr}}\right),
    \end{align}
    with $K_{dr}=\frac{2\mu}{d}+\lambda$ being the drained bulk modulus where $d$ is the dimension. It is common to view the tuning of $L$ as tuning $K_{dr}$, but for simplicity, we tune $L$. In contrast to the two-phase example, where we wanted the smallest $L$ possible, we know that the optimal parameter may be close to one of the endpoints or somewhere in the middle depending on the problem. This makes designing a good adaptive algorithm for tuning the parameter more difficult.  
    

    We consider two residuals since the system is decoupled.
	\begin{definition}[Residual quasi-static Biot]
	    For all $(q_h,\bmv_h)\in (Q_h,\bm{V}_h)$ let the flow and mechanics residuals be defined as
	\begin{subequations}
   \begin{align}
   \begin{split}
   \langle\mathcal{R}_{\rm flow}(\varphi_1,\bm{\varphi}_2),(q_h,\bmv_h)\rangle:=&\, 	\langle c_0 (\varphi_1-p^{n-1}_{h}),q_h\rangle +\tau\langle k(\del \varphi_1-\rho\bm{g}),\del q_h\rangle\\&+\langle \alpha \div (\bm{\varphi}_2-\bm{u}^{n-1}),q_h\rangle-\tau\langle f_5,q_h\rangle,
   \end{split}\\
        \langle\mathcal{R}_{\rm mech}(\varphi_1,\bm{\varphi}_2),(q_h,\bmv_h)\rangle:=&\, 	\langle 2\mu\e(\bm{\varphi}_2),\e(\bmv_h)\rangle + \langle \lambda\div \bm{\varphi}_2,\div \bmv_h\rangle +\langle \alpha \del \varphi_1,\bmv_h\rangle - \langle \bmF_6,\bmv_h\rangle.
   \end{align}
	\end{subequations}
    \label{def: biot residual} 
	\end{definition}
    
	\subsection{Fixed-stress splitting}
	The fixed-stress algorithm can be expressed as; For a stabilization parameter $L>0$ given $p^{n-1}_{h},p^{k}_{h}\in Q_h$, and $\bmu_{h}^{k}\in\bm{V}_h$ find $p^{k+1}_{h}\in Q_h$ such that
	\begin{subequations}\label{eq: biot fixed-stress full formulation}
		\begin{align}
        \begin{split}
			\langle c_0 (p^{k+1}_{h}-p^{n-1}_{h}),q_h\rangle +\langle L\delta p^{k+1}_{h},q_h\rangle+\tau\langle (k\del p^{k+1}_{h}-\rho\bm{g}),\del q_h\rangle&\\+\langle \alpha \div (\bmu_{h}^{k}-\bm{u}^{n-1}),q_h\rangle&=\langle f_5^{n},q_h\rangle,
            \end{split}
            \end{align}
            for all $q_h\in Q_h$. Next solve the mechanincs equation: Given $(p^{k+1}_{h},\bmu_{h}^{k})\in (Q_h,\bm{V}_h)$ find $\bmu_{h}^{k+1}\in \bm{V}_h$ such that
            \begin{align}
			\langle 2\mu\e(\bmu_{h}^{k+1}),\e(\bmv_h)\rangle+\langle \lambda \div \bmu_{h}^{k+1},\div\bmv_h\rangle+\langle \alpha\del p^{k+1}_{h},\bmv_h\rangle=\langle \bm{f}_6^{n},\bmv_h\rangle,
		\end{align}
        for all $\bmv_h\in\bm{V}_h$.
	\end{subequations}
	Based on \eqref{eq: biot fixed-stress full formulation}, we can define two separate bilinear forms by
	\begin{subequations}\label{eq: biot bilinear form}
		\begin{align}
		    \mathcal{B}_{\rm flow}(\varphi,q_h):=&\, \langle (c_0+L)\varphi_1,q_h\rangle +\tau \langle \del\varphi_1,\del q_h\rangle,\\
            \mathcal{B}_{\rm mech}(\bm{\varphi}_2,\bmv_h):=&\, \langle 2\mu\e(\bm{\varphi}_2),\e(\bmv_h)\rangle + \langle \lambda\div \bm{\varphi}_2,\div \bmv_h\rangle - \langle \alpha\del \varphi_1,\bmv_h\rangle.
		\end{align}
	\end{subequations}
	The fixed-stress algorithm \eqref{eq: biot fixed-stress full formulation} can then be expressed in the following way. 
	\begin{iterative}[Fixed-stress] For the bilinear forms defined in \eqref{eq: biot bilinear form} and the residuals in \Cref{def: biot residual} the fixed-stress splitting can be defined as
	\begin{subequations}\label{eq: biot fixed stress}
	\begin{align}
	     \mathcal{B}_{\rm flow}(\delta p^{k+1}_{h},q_h)&=-\langle\mathcal{R}_{\rm flow}(p^{k}_{h},\bmu_{h}^{k}),q_h\rangle,
        \\
        \mathcal{B}_{\rm mech}(\delta\bmu_{h}^{k+1},\bmv_h)&=-\langle\mathcal{R}_{\rm mech}(p^{k}_{h},\bmu_{h}^{k}),\bmv_h\rangle.
	\end{align}
	\end{subequations}
	\end{iterative}
    Since the equations are decoupled, we consider the iteration-dependent norms separately, instead of considering the entire system at once as we have done previously.
	\begin{definition}[Iteration dependent norms for fixed-stress] For $\varphi_1\in H_{0}^{1}(\Omega)$ and $\bm{,\varphi}_2\in\bm{H}^{1}_{0}(\Omega)$, the iteration-dependent norms for the fixed-stress splitting \eqref{eq: biot fixed stress} are defined by
	    \begin{subequations}
        \begin{align}
		\norm{\varphi_1}_{\rm flow}:=&\,\left(\int_{\Omega} (c_0+L)(\varphi_1)^{2}+\tau |k^{\frac{1}{2}}\del\varphi_1|^{2}\right)^{\frac{1}{2}},\\
        \norm{\bm{\varphi}_2}_{\rm mech}:=&\,\left(\int_{\Omega} 2\mu\e(\bm{\varphi}_2):\e(\bm{\varphi}_2)+\lambda(\div\bm{\varphi}_2)^{2}\right)^{\frac{1}{2}}.
        \end{align}
	\end{subequations}
	\end{definition}
    \begin{remark}[Incremental error for decoupled systems]
        In general, it is possible to use only one equation to assess the performance of the iterative method. This would mean only deriving an estimate for the incremental error of the flow equation, but since it does not reflect the convergence rate of the mechanics equation, we chose to define the incremental error as 
        \begin{equation}
            \eta_{\!_{{\rm inc},5}}^{k}:=\norm{\delta p^{k}_{h}}_{\rm flow}+\norm{\delta \bmu_{h}^{k}}_{\rm mech}.
        \end{equation}
    \end{remark}
	
	\subsection{Estimators}
    Here, we derive the estimators for the adaptive algorithm.
	\begin{lemma}\label{lem: biot}
	Let $\{p^{k}_{h},\bmu_{h}^{k}\}$ be a sequence of iterates generated using the fixed-stress \eqref{eq: biot fixed stress}, then the error of the iterative decoupling satisfies
	\begin{equation}\label{eq: biot final estimate}
		\norm{\delta p^{k+1}_{h}}_{\rm flow}+\norm{\delta \bmu_{h}^{k+1}}_{\rm mech}\leq \eta_{\!_{5\to 5}}^{k},
	\end{equation}
	where $\eta_{\!_{5\to 5}}^{k}=\eta_{\rm flow}^{k}+\eta_{\rm mech}^{k}$ and
    \begin{subequations}
	\begin{align}
		\eta_{\rm flow}^{k}:=&\,(\|(L+c_0)^{-\frac{1}{2}}L\delta p^{k}_{h}\|^{2}+\|(\t\k)^{-\frac{1}{2}}\alpha \delta\bmu_{h}^{k}\|^{2})^{\frac{1}{2}},\\
        \eta_{\rm mech}^{k}:=&\,\left(\left\| \lambda^{-\frac{1}{2}}\alpha\left(\frac{\alpha c_0^{-1}}{\alpha c_0^{-1}+2K_{dr}}\right)^{\frac{1}{2}}\delta p^{k}_{h}\right\|^{2}\right)^{\frac{1}{2}}.
\end{align}
   \end{subequations}
	\end{lemma}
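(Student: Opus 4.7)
The plan is to treat the flow and mechanics equations separately, each following the four-step template introduced in \Cref{sec: framework}.

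\textbf{Flow estimate.} Test the flow iteration in \eqref{eq: biot fixed stress} with $q_h=\delta p_h^{k+1}$ to obtain $\|\delta p_h^{k+1}\|_{\rm flow}^{2}=\mathcal{B}_{\rm flow}(\delta p_h^{k+1},\delta p_h^{k+1})=-\langle\mathcal{R}_{\rm flow}(p_h^{k},\bmu_h^{k}),\delta p_h^{k+1}\rangle$ (Step~1). A direct computation from \Cref{def: biot residual} shows that $\mathcal{R}_{\rm flow}(p_h^{k},\bmu_h^{k})-\mathcal{R}_{\rm flow}(p_h^{k-1},\bmu_h^{k-1})$ collapses to $\langle c_{0}\delta p_h^{k},q_h\rangle+\tau\langle\k\del\delta p_h^{k},\del q_h\rangle+\langle\alpha\div\delta\bmu_h^{k},q_h\rangle$. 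Invoking the previous iteration $\mathcal{B}_{\rm flow}(\delta p_h^{k},q_h)=-\langle\mathcal{R}_{\rm flow}(p_h^{k-1},\bmu_h^{k-1}),q_h\rangle$ (Steps~2--3) cancels the $c_{0}$ and diffusion contributions, leaving
\[
\|\delta p_h^{k+1}\|_{\rm flow}^{2}=\langle L\delta p_h^{k},\delta p_h^{k+1}\rangle+\langle\alpha\delta\bmu_h^{k},\del\delta p_h^{k+1}\rangle
\]
after integration by parts on the divergence term. A weighted Cauchy--Schwarz with weights $(c_{0}+L)^{-1/2}$ and $(\tau\k)^{-1/2}$ matched to $\|\cdot\|_{\rm flow}$, followed by the elementary inequality $a_{1}b_{1}+a_{2}b_{2}\leq\sqrt{a_{1}^{2}+a_{2}^{2}}\sqrt{b_{1}^{2}+b_{2}^{2}}$, yields $\|\delta p_h^{k+1}\|_{\rm flow}\leq\eta_{\rm flow}^{k}$. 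Since $\mathcal{B}_{\rm flow}$ is symmetric, Step~4 is vacuous here.

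\textbf{Mechanics estimate.} Testing the mech iteration with $\bmv_h=\delta\bmu_h^{k+1}$ gives $\|\delta\bmu_h^{k+1}\|_{\rm mech}^{2}=-\langle\mathcal{R}_{\rm mech}(p_h^{k+1},\bmu_h^{k}),\delta\bmu_h^{k+1}\rangle$; note that $p_h^{k+1}$ (and not $p_h^{k}$) appears since the mechanics step consumes the just-updated pressure. Applying Steps~2--3 with $-\langle\mathcal{R}_{\rm mech}(p_h^{k},\bmu_h^{k-1}),\bmv_h\rangle=\mathcal{B}_{\rm mech}(\delta\bmu_h^{k},\bmv_h)$, all elasticity contributions cancel, leaving only the pressure--coupling remnant,
\[
\|\delta\bmu_h^{k+1}\|_{\rm mech}^{2}=-\langle\alpha\del\delta p_h^{k+1},\delta\bmu_h^{k+1}\rangle=\langle\alpha\delta p_h^{k+1},\div\delta\bmu_h^{k+1}\rangle.
\]
Weighted Cauchy--Schwarz with $\lambda^{-1/2}$ balanced against the $\lambda^{1/2}\div(\cdot)$ contribution in $\|\cdot\|_{\rm mech}$ gives $\|\delta\bmu_h^{k+1}\|_{\rm mech}\leq\|\lambda^{-1/2}\alpha\delta p_h^{k+1}\|$.

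\textbf{Main obstacle.} The remaining task is to express the mechanics bound purely in terms of iteration-$k$ quantities, i.e.\ to replace $\delta p_h^{k+1}$ by $\delta p_h^{k}$. This step is not accessible by the algebraic template above: it requires invoking the a priori contraction estimate for the pressure update of the fixed-stress scheme, cf.~\cite{storvik_optimization_2019,mikelic_convergence_2013,both_robust_2017}. The constant $\sqrt{\alpha c_{0}^{-1}/(\alpha c_{0}^{-1}+2K_{dr})}$ is precisely the contraction factor produced by that analysis when the estimate is propagated into the weighted $\lambda^{-1/2}\alpha$ norm, so the real technical work is tracking that factor cleanly through the existing convergence argument rather than re-deriving it. Adding the flow and mechanics bounds then delivers the stated estimator $\eta_{\!_{5\to 5}}^{k}=\eta_{\rm flow}^{k}+\eta_{\rm mech}^{k}$ and completes the proof.
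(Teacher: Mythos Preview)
Your proposal is correct and follows essentially the same route as the paper: separate treatment of the flow and mechanics increments via Steps~1--3 of the framework, weighted Cauchy--Schwarz to recover the iteration-dependent norms, and, for the mechanics bound, invocation of the known fixed-stress contraction estimate $\|\delta p_h^{k+1}\|^{2}\leq\frac{\alpha c_0^{-1}}{\alpha c_0^{-1}+2K_{dr}}\|\delta p_h^{k}\|^{2}$ from the literature to pass from $\delta p_h^{k+1}$ back to $\delta p_h^{k}$. The only remark is that the last step is more immediate than your phrasing suggests: since $\lambda$ and $\alpha$ are constants, the weighted norm $\|\lambda^{-1/2}\alpha\,\cdot\,\|$ is just a scalar multiple of $\|\cdot\|$, so the cited $L^{2}$ contraction applies directly with no additional tracking required.
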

	\begin{proof} First, we consider the flow part, and perform steps 1-3. Note that the bilinear form is symmetric, and therefore the fourth step is not needed.
				\begin{equation}
			\begin{aligned}
				\norm{\delta p^{k+1}_{h}}_{\rm flow}^{2}=&
				\int_{\Omega} (c_0+L)(\delta p^{k+1}_{h})^{2}+\tau |k^{\frac{1}{2}}\del \delta p^{k+1}_{h}|^{2}\\
                =&-\langle\mathcal{R}_{\rm flow}(p^{k}_{h},\bmu_{h}^{k}),\delta p^{k+1}_{h}\rangle\\
                =&-\langle\mathcal{R}_{\rm flow}(p^{k}_{h},\bmu_{h}^{k}),\delta p^{k+1}_{h}\rangle+\langle\mathcal{R}_{\rm flow}(\bmu_{h}^{k-1},p^{k-1}_{h}),\delta p^{k+1}_{h}\rangle+\mathcal{B}_{\rm flow}(\delta p^{k}_{h},\delta p^{k+1}_{h})\\
				=& \langle L \delta p^{k}_{h},\delta p^{k+1}_{h}\rangle+\langle \alpha \delta\bmu_{h}^{k},\del\delta p^{k+1}_{h}\rangle
                \\
                \leq& \|(L+c_0)^{-\frac{1}{2}}L\delta p^{k}_{h}\|
                \|(L+c_0)^{\frac{1}{2}}\delta p^{k+1}_{h}\|+\|(\t\k)^{-\frac{1}{2}}\alpha \delta\bmu_{h}^{k}\|
                \|\t^{\frac{1}{2}}\k^{\frac{1}{2}}\del\delta p^{k+1}_{h}\|\\
				\leq&\eta_{\rm flow}^{k}\norm{\delta p^{k+1}_{h}}_{\rm flow}.
			\end{aligned}
		\end{equation}
        Next, we bound the iteration-dependent mechanics norm. It follows similar steps, but in contrast to the previous proofs, we use an existing result from the literature for the fixed-stress, see e.g. \cite{mikelic_convergence_2013}, that
        \begin{align}\label{eq: biot convergence}
            \|\delta p^{k+1}_{h}\|^{2}\leq \frac{\alpha c_0^{-1}}{\alpha c_0^{-1}+2K_{dr}}\|\delta p^{k}_{h}\|^{2},
        \end{align}
        to deal with the coupling term. Following the same procedure as above, we get
        \begin{align*}
            \norm{\delta \bmu_{h}^{k+1}}_{\rm mech}^{2}=&\,\int_{\Omega} 2\mu\e(\delta \bmu_{h}^{k+1}):\e(\delta \bmu_{h}^{k+1})+\lambda(\div\delta \bmu_{h}^{k+1})^{2}\\
            \leq&\,\| \lambda^{-\frac{1}{2}}\alpha\delta p^{k+1}_{h}\|\|\lambda^{\frac{1}{2}}\div\delta\bmu_{h}^{k+1}\|\\
            \overset{\eqref{eq: biot convergence}}{\leq}&\,\left\| \lambda^{-\frac{1}{2}}\alpha\left(\frac{\alpha c_0^{-1}}{\alpha c_0^{-1}+2K_{dr}}\right)^{\frac{1}{2}}\delta p^{k}_{h}\right\|\|\lambda^{\frac{1}{2}}\div\delta\bmu_{h}^{k+1}\|\\
            \leq& \eta_{\rm mech}^{k}\norm{\delta\bmu_{h}^{k+1}}_{\rm mech}.
        \end{align*}
        Combining the two estimates yields the final estimate \eqref{eq: biot final estimate}.
	\end{proof}
	\begin{remark}
		[Extension to different stabilizations, including nonlinear poro-mechanical models]
		In the case of fixed-stress applied to non-linear permeability of the type $K(\div\bmu)$ which is proven to converge in \cite{kraus_fixed-stress_2024}, the estimator in \Cref{lem: biot} only needs to be modified to have a term of the type $(k(\div \bmu_{h}^{k})-k(\div \bmu_{h}^{k-1}))\del p^{k}_{h}$ in $\eta_{\rm flow}$. In the non-linear extension \cite{borregales_robust_2018} where there are two non-linear functions $b(p)$ and $\lambda(\div\bmu)$ with two stabilization parameters, each equation can be viewed separately to tune both $L_1$ and $L_2$. The estimator $\eta_{\rm flow}$ for $L_1$ would be similar to \Cref{lem: biot} with a dependence on $b(p^{k}_{h})$. In the stabilization proposed in \cite{pe_de_la_riva_oscillation-free_2025}, being an iterative decoupling method closely related to the fixed-stress, with a different tuneable parameter $\gamma$, can also be written in the form \eqref{eq: general linearization}. 
	\end{remark}

	\subsection{Adaptive algorithm}

    For the following, we denote $L_{start}$ as the starting value of the adaptive scheme. Based on the bound \eqref{eq: biot bound on L} we will choose the smallest $L$ possible initially, such that we expect to increase the stabilization parameter.  For simplicity we will either decrease or increase $L$ based on what happened in the previous time step. Another point is how much we should increase or decrease the parameter. Since we start with a small $L$, we want to allow for a more aggressive increase, but a more conservative decrease. When evaluating the estimator $\eta_{\!_{5\to 5}}^{k}$ we saw that the efficiency of the estimator was high, as will be seen in the subsequent subsection. This makes creating a criterion for when to tune $L$ more difficult, but first note that a high efficiency index can be interpreted as the convergence is faster than expected. Secondly, another consequence of the high efficiency of the estimator is that the ratio $\eta_{\!_{5\to 5}}^k/\eta_{\!_{inc,5}}^{k}$ can be larger than one despite the fact that it converges. Based on these observations, we propose the following algorithm.
	\begin{algorithm}[H]
		\caption{Adaptive Fixed-Stress}\label{alg: a-posteriori 5}
		\begin{algorithmic}
			\Require $p^{n,0},\bmu^{n,0}$ as initial guess. $L=L_{\rm start}$, choose $C_{\rm inc}$, bool increase = true, bool hasIncreased = false.
			\Ensure Scheme=\fbox{Fixed-Stress}
            \If{hasIncreased==true}
            \State Increase=false
            \Else
            \State Increase =true
            \EndIf
			\For{k=1,2,..}
			\State
			Compute iterate using  \fbox{Fixed-Stress}, i.e.  \eqref{eq: biot fixed stress}       
			\If{$\eta_{\!_{5\to 5}}^k\geq  10\eta_{\!_{inc,5}}^{k}$ and $({\rm Eff. Ind})_k<100$}
            \If{Increase==true}
            \State $L=\min\{L_{\rm phys},C_{\rm inc}L\}$, hasIncreased=true
            \Else \State $L=\max\{L_{\rm min},0.9L\}$,\, hasIncreased=false
            \EndIf
			\EndIf
			\EndFor
		\end{algorithmic}
	\end{algorithm}
	\subsection{Numerical examples}

    The following example is inspired by \cite{bause_spacetime_2017, both_numerical_2019}. We consider an L-shaped domain $\Omega= (0,1)^{2}\backslash [0.5,1]^{2}$ with $h=\sqrt{2}/80$ on the time interval $(0,0.5)$ with a fixed time step size $\t=0.01$. The initial pressure and the initial displacement are zero, i.e. $p(\cdot,0)=0$ and $\bmu(\cdot,0)=0$. On top of the domain, we consider a time-dependent traction force $\bm{t}=(0,-256h_{\rm max}t^{2}(t-0.5)^{2})^{T}$ with $h_{max}=1e10$. On the top, we also impose a homogeneous Dirichlet condition for the pressure $p=0$. On the remaining flow boundary, we use a no-flow condition. For the displacement at the boundaries, we enforce zero normal displacement on the left, bottom, and in the cut of the L-shaped domain, and finally, on the lower right side, a zero traction condition. We consider a medium and fluid with the following properties: Young's modulus $E=1e11$, storage coefficient $c_0=1e-11$, permeability $k=1e-13$, the Biot coefficient $\alpha=0.9$, and fluid viscosity $\mu=1$. Gravitational effects are ignored, i.e. $\bm{g}=0$. We study the performance for the Poisson's ratio $\nu\in\{0.01,0.2,0.4\}$. The adaptive algorithm will in this section be denoted $L_A(C_{\rm inc})$ with $C_{\rm inc}$ being the constant with which the $L$ is increased. We consider a wide variety of stabilization parameters $L$s commonly used in the literature, including the endpoints of the bound on $L$, see \Cref{tab: biot parameters} for their name and values. In particular, we mention $L_{1D}$, which is known to be good for this problem, especially for a higher Poisson's ratio.
    \renewcommand{\arraystretch}{1.3} 
    \begin{table}[H]
        \centering
        \begin{tabular}{c|ccccccc}
           Name&$L_{\rm min}$  &  $L_{\rm phys}$&$L_{\rm MW}$&  $L_{\rm 1D}$ & $L_{opt}(0.01)$& $L_{opt}(0.2)$& $L_{opt}(0.4)$\\
           \hline
           Value & $\frac{\alpha^2}{4\mu+2\lambda}$ & $\frac{\alpha^2}{K_{dr}}$ & $\frac{\alpha^2}{2K_{dr}}$ & $\frac{\alpha^2}{2\mu+\lambda}$& $2.5L_{\rm min}$ &$2.3L_{\rm min}$&$L_{\rm 1D}$\\
        \end{tabular}
        \caption{Name and value of specific stabilization parameters. The $L_{opt}$ is a brute force calculated optimal $L$ for the first time step on a coarse mesh for each $\nu$.}
        \label{tab: biot parameters}
    \end{table}
    \noindent We use the same relative error stopping criterion as \cite{storvik_optimization_2019}
    \begin{equation*}
        \|p^{k}_{h}-p^{k-1}_{h}\|\leq 10^{-6}\|p^{k}_{h}\|.
    \end{equation*}
	\subsubsection{Comparison of convergence properties}
The total number of iterations for each $L$ is displayed in \Cref{tab: biot}. The choice $L_{\rm min}$, being the smallest value, results in the largest number of iterations across all $\nu$ considered. $L_{\rm MW}$ and $L_{\rm phys}$ uses a similar amount of iterations for $\nu=0.01$ and $\nu=0.2$, with $L_{\rm phys}$ using fewer for the largest Poisson ratio. As expected for this example $L_{1D}$ uses fewer iterations than the aforementioned parameters and also uses fewer iterations than $L_{A}(1.25)$ for all $\nu$. For smaller Poisson ratio the adaptive algorithms $L_{A}(1.3)$ and $L_{A}(1.4)$ uses fewer iterations than $L_{1D}$. However, for $\nu=0.4$ all the adaptive algorithms use the largest number of iterations. The brute-force optimized $L_{opt}$ uses the fewest number of iterations out of all choices of $L$.
\renewcommand{\arraystretch}{1.3} 
	\begin{table}[H]
    \centering
	\begin{tabular}{c|ccc}
		\hline
		$\nu$ & 
		0.01 & 
		0.2 & 
		0.4
	 \\
		\hline
		Scheme  &
		Tot. Itr & 
		 Tot. Itr & 
		 Tot. Itr 
		 \\
		 \hline
		$L_{\rm min}$ &  2413 & 812 & 438  \\ 
		$L_{\rm MW}$ &  576 & 488 & 399  \\
		$L_{\rm phys}$ & 593 & 476 & 320  \\
		$L_{\rm 1D}$ &  568 & 386 & 247 \\
        $L_A(1.25)$& 589& 398&438\\
		$L_A(1.3)$ &  501 & 353 & 438  \\  
         $L_A(1.4)$& 491& 349 &438\\
         $L_{opt}$ & 465& 341&247\\
		\hline
	\end{tabular}
	\captionsetup{width=\textwidth}
	\caption{Test case: Biot - Total number of iterations at $T=0.5$ with $\tau=0.01$ for different stabilization parameters for varying Poisson's ratio $\nu$.}
	\label{tab: biot}
	\end{table}

	\subsubsection{Switching characteristics}

    In \Cref{fig: biot switching} the evolution of the switching indicators along with the effectivity indices is plotted for the first time step, both for the constant $L_{opt}$ in (a) and also for the adaptive algorithm $L_A(1.4)$ (b). The optimal parameter has a very high initial efficiency index, indicating that the bound in \Cref{lem: biot} is not sharp. But a high efficiency index can be interpreted as the convergence is better than expected. In addition, for $L_{opt}$ in view of \Cref{alg: a-posteriori 5} no tuning would have happened. For the adaptive algorithm $L_A(1.4)$, the $L$ is increased three times for the two smallest values of $\nu$ before it stops increasing in the first time step. In the case when $\nu=0.4$, no changes in $L$ happens and it performs like $L_{min}$. From \Cref{tab: biot} it is evident that the same phenomena occur for all the adaptive algorithms.
    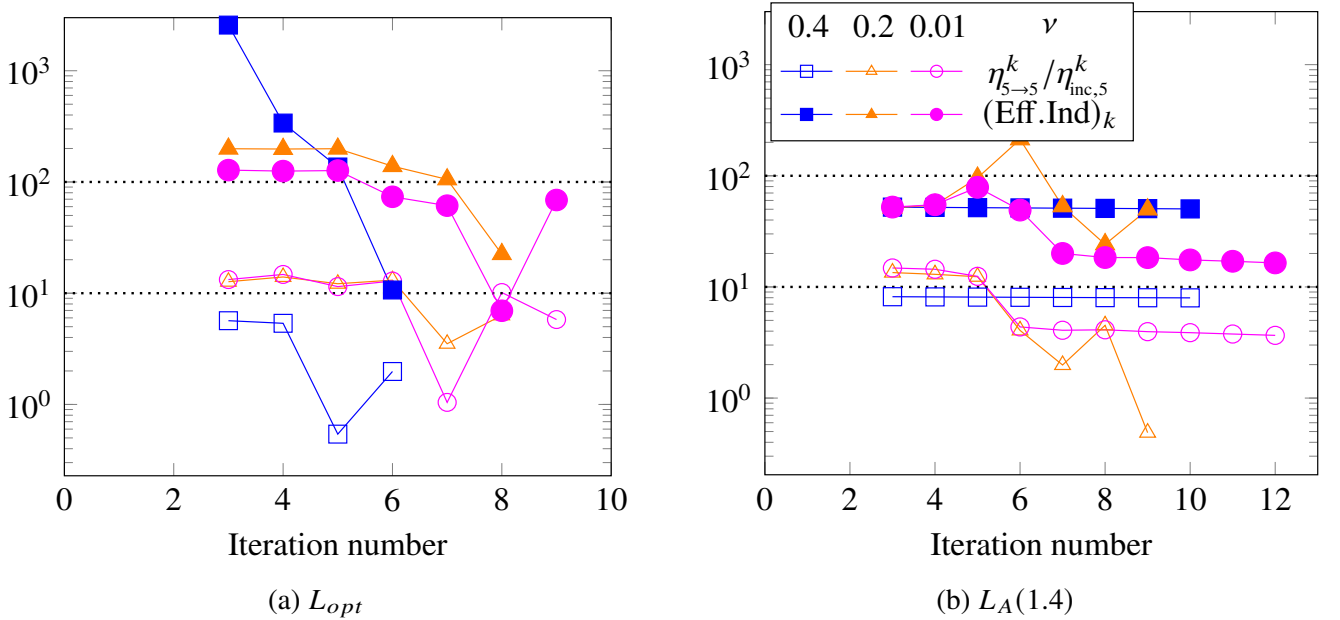
\begin{figure}[H]
\centering

\tikzstyle{every node}=[font=\small]
    \subfloat[][$L_{opt}$]{\resizebox{0.48\textwidth}{!}{
 \begin{tikzpicture} 
\begin{semilogyaxis}
[ width=8cm,
height=7cm,
    xlabel={ Time},
    xmin = 0,
    xmax = 10,
    ymax = 3000,
xlabel=Iteration number,legend style={at={(0.62,0.875)},anchor=west,legend columns=4,font=\small}]

\addplot[color=blue,mark=square,mark options={scale=1.5}] 
coordinates {  

	(3,5.660916909	)
    (4,5.363346092	)
   (5, 0.5409518992)
	(6,1.978216848)
}; 

\addplot[color=orange,mark=triangle,mark options={solid,scale=1.5}] 
coordinates {  

	(3,12.68688552	)
    (4,13.99244142	)
    (5,12.16629476)
	(6,13.10498614	)
    (7,3.513460756	)
    (8,6.324533926)
}; 

\addplot[color=colorL1,mark=o,mark options={solid,scale=1.5}] 
coordinates {  

	(3,13.26145839	)
    (4,14.7451385	)
   (5, 11.46241314)
	(6,12.86891631	)
    (7,1.043425397	)
    (8,10.13866067	)
    (9,5.796350233)
}; 


\addplot[color=blue,mark=square*,mark options={scale=1.5}] 
coordinates {  
(3,2572.560654	)
(4,338.6233975	)
(5,136.3532907)
	(6,10.68531033)

}; 

\addplot[color=orange,mark=triangle*,mark options={solid,scale=1.8}] 
coordinates {  
(3,199.01405	)
(4,197.78646	)
(5,199.0898858)
	(6,138.6683621	)
  ( 7, 105.4540527	)
   (8, 22.43408708)

}; 

\addplot[color=colorL1,mark=*,mark options={solid,scale=1.8}] 
coordinates {  
(3,127.8125291	)
(4,125.2554983	)
(5,126.6591023)
	(6,73.60486581	)
   (7, 61.24862823	)
    (8,6.956185209	)
   (9, 68.83253943)
	
};


\addplot[thick,dotted] coordinates {(0,10) (10,10)};
\addplot[thick,dotted] coordinates {(0,100) (10,100)};
    
\end{semilogyaxis} 
\end{tikzpicture}

}}\hspace{0.02\textwidth}
    \subfloat[][$L_A(1.4)$]{\resizebox{0.465\textwidth}{!}{
    \begin{tikzpicture} 
\begin{semilogyaxis}
[ width=8cm,
height=7cm,
    xlabel={ Time},
    xmin = 0,
    xmax = 13,
    ymax = 3000,
xlabel=Iteration number,legend style={at={(0.01,0.87)},anchor=west,legend columns=4,font=\small}] 

\addlegendentry{\hspace{-20pt}{0.4}}
\addlegendimage{empty legend}
\addlegendentry{\hspace{-20pt}{0.2}}
\addlegendimage{empty legend}
\addlegendentry{\hspace{-20pt}{0.01}}
\addlegendimage{empty legend}
\addlegendentry{$\nu$}
\addlegendimage{empty legend}
\addlegendimage{color=blue,mark=square}
\addlegendentry{}
\addlegendimage{color=orange,mark=triangle}
\addlegendentry{}
\addlegendimage{color=colorL1,mark=o}
\addlegendentry{}
\addlegendimage{empty legend}
\addlegendentry{$\eta_{\!_{5\to 5}}^{k}/\eta_{\!_{{\rm inc},5}}^{k}$}
\addlegendimage{color=blue,mark=square*}
\addlegendentry{}
\addlegendimage{color=orange,mark=triangle*}
\addlegendentry{}
\addlegendimage{color=colorL1,mark=*}
\addlegendentry{}
\addlegendimage{empty legend}
\addlegendentry{$({\rm Eff. Ind})_k$}
\addplot[color=blue,mark=square,mark options={scale=1.5}] 
coordinates {  

	(3,8.166760162	)
    (4,8.133868637	)
    (5,8.099289411)
	(6,8.066747826	)
    (7,8.038266143	)
    (8,8.014680301	)
    (9,7.995037647	)
    (10,7.974897086)
}; 

\addplot[color=orange,mark=triangle,mark options={solid,scale=1.5}] 
coordinates {  

	(3,13.46671486	)
    (4,12.98439806	)
    (5,12.35530352)
	(6,4.06352847	)
    (7,1.985037361	)
    (8,4.510753787	)
    (9,0.4884641553)
}; 

\addplot[color=colorL1,mark=o,mark options={solid,scale=1.5}] 
coordinates {  

	(3,14.80960382	)
    (4,14.42048126	)
   (5, 12.45247489)
	(6,4.367078145	)
    (7,4.075044719	)
    (8,4.11479117	)
    (9,3.959292564	)
    (10,3.877752216)
	(11,3.769760625	)
    (12,3.666703376)
};

\addplot[color=blue,mark=square*,mark options={scale=1.5}] 
coordinates {  
(3,52.23688664	)
(4,51.95730732	)
(5,51.66925295)
	(6,51.38992167	)
 (7,   51.1264074	)
  (8,  50.87770413	)
   (9, 50.63013942	)
    (10,50.34614654)

}; 

\addplot[color=orange,mark=triangle*,mark options={solid,scale=1.8}] 
coordinates {  
(3,51.56817062	)
(4,54.46336997	)
(5,95.87855058)
	(6,212.6320893	)
  (7,  53.13605876	)
  (8,  24.15653211	)
  (9,  49.67889122)

}; 

\addplot[color=colorL1,mark=*,mark options={solid,scale=1.8}] 
coordinates {  
(3,52.28510978	)
(4,54.91664277	)
(5,78.75443773)
	(6,49.1286121	)
  (7,  20.00838375	)
    (8,18.39202199	)
   (9, 18.35721168	)
    (10,17.49724273)
	(11,17.01455333	)
    (12,16.45161232)
	
};

\addplot[thick,dotted] coordinates {(0,10) (13,10)};
\addplot[thick,dotted] coordinates {(0,100) (13,100)};
    
\end{semilogyaxis} 
\end{tikzpicture}    }}

\caption{Test case: Biot - Evolution of switching indicators for the constant $L_{opt}$ (a) and adaptive $L_A(1.4)$ (b) for different $\nu$ along with the efficiency indices \eqref{eq: eff ind} during the first time step for $\t=0.01$ and $h=\sqrt{2}/80$. The efficiency index is plotted at the iteration when it is available, since the previous estimator can not be evaluated before the next iteration is computed.}
\label{fig: biot switching}
\end{figure}
	
	\section{Conclusions}
	In this paper, we have provided a framework for the design of convergent adaptive iterative algorithms for solving multi-physics problems. This includes adaptive switching between two linearization schemes, adaptive tuning of a stabilization parameter, and adaptive time stepping. The algorithms are based on \textit{a posteriori} error estimators for the incremental error (linearization or iterative error) of a numerical scheme. The estimators are computationally inexpensive.  The methodology has been tested on three problems: two-phase flow in porous media, surfactant transport in porous media, and the quasi-static Biot system.  

\section*{Acknowledgments}
The authors acknowledge the support of the VISTA program, The
Norwegian Academy of Science and Letters and Equinor. FAR wants to thank the support from the project MUPSI, CETP-2023-00298.

\bibliographystyle{elsarticle-num} 
\bibliography{ref}	

\end{document}